\newtheorem{theorem}{Theorem}[section]
\newtheorem{lemma}[theorem]{Lemma}
\newtheorem{proposition}[theorem]{Proposition}
\newtheorem{corollary}[theorem]{Corollary}
\theoremstyle{plain}
\theoremstyle{definition}
\newtheorem{remark}[theorem]{Remark}
\numberwithin{equation}{section}
\renewcommand{\theenumi}{(\roman{enumi})}
\renewcommand{\labelenumi}{\textup{(\theenumi)}}
\title{On strong extension groups of  Cuntz--Krieger algebras \\
%(Extstrongv5.tex)
}
\author{Kengo Matsumoto \\
Department of Mathematics \\
Joetsu University of Education \\
Joetsu, 943-8512, Japan
}
\begin{document}

%\date{2020, Nov 22}

\maketitle

\date{}

\def\det{{{\operatorname{det}}}}

%\maketitle
\begin{abstract}
In this paper, we study the strong extension groups
of Cuntz--Krieger algebras, and present a formula to compute the groups.
We also detect the position of the Toeplitz extension of a Cuntz--Krieger algebra
in the strong extension group and in the weak extension group
to see that the weak extension group with the position of the Toeplitz extension 
 is a complete invariant of the isomorphism class of the Cuntz--Krieger algebra 
 associated with its transposed matrix.  
\end{abstract}

{\it Mathematics Subject Classification}:
 Primary 46L80; Secondary 19K33.

{\it Keywords and phrases}: Extension group, $C^*$-algebra, extension,
Cuntz--Krieger algebra,  strong extension group, Toeplitz extension, Fredholm index

%%%%%%%%%%%%%%%%%%%%%%%%%%%%%%
%2010{\it Mathematics Subject Classification}:
% Primary 46L55; Secondary 46L35, 37B10.

%{\it Keywords and phrases}:
%Topological Markov shifts, orbit equivalence,  Cuntz-Krieger algebras

\newcommand{\Ker}{\operatorname{Ker}}
\newcommand{\sgn}{\operatorname{sgn}}
\newcommand{\Ad}{\operatorname{Ad}}
\newcommand{\ad}{\operatorname{ad}}
\newcommand{\orb}{\operatorname{orb}}

\def\Re{{\operatorname{Re}}}
\def\det{{{\operatorname{det}}}}
\newcommand{\K}{\mathbb{K}}

\newcommand{\N}{\mathbb{N}}
\newcommand{\C}{\mathbb{C}}
\newcommand{\R}{\mathbb{R}}
\newcommand{\Rp}{{\mathbb{R}}^*_+}
\newcommand{\T}{\mathcal{T}}

\newcommand{\Z}{\mathbb{Z}}
\newcommand{\Zp}{{\mathbb{Z}}_+}
\def\AF{{{\operatorname{AF}}}}

\def\Ext{{{\operatorname{Ext}}}}
\def\Exts{{{\operatorname{Ext}_{\operatorname{s}}}}}
\def\Extw{{{\operatorname{Ext}_{\operatorname{w}}}}}
\def\Ext{{{\operatorname{Ext}}}}

\def\OA{{{\mathcal{O}}_A}}
\def\ON{{{\mathcal{O}}_N}}

\def\TA{{{\mathcal{T}}_A}}
\def\TN{{{\mathcal{T}}_N}}

\def\A{{\mathcal{A}}}
\def\OaA{{{\mathcal{O}}^a_A}}
\def\OB{{{\mathcal{O}}_B}}
\def\OTA{{{\mathcal{O}}_{\tilde{A}}}}
\def\F{{\mathcal{F}}}
\def\G{{\mathcal{G}}}
\def\FA{{{\mathcal{F}}_A}}
\def\PA{{{\mathcal{P}}_A}}
\def\C{{\mathbb{C}}}

 \def\U{{\mathcal{U}}}
\def\OF{{{\mathcal{O}}_F}}
\def\DF{{{\mathcal{D}}_F}}
\def\FB{{{\mathcal{F}}_B}}
\def\DA{{{\mathcal{D}}_A}}
\def\DB{{{\mathcal{D}}_B}}
\def\DZ{{{\mathcal{D}}_Z}}

\def\Ext{{{\operatorname{Ext}}}}
\def\Max{{{\operatorname{Max}}}}
\def\Max{{{\operatorname{Max}}}}
\def\max{{{\operatorname{max}}}}
\def\KMS{{{\operatorname{KMS}}}}
\def\Per{{{\operatorname{Per}}}}
\def\Out{{{\operatorname{Out}}}}
\def\Aut{{{\operatorname{Aut}}}}
\def\Ad{{{\operatorname{Ad}}}}
\def\Inn{{{\operatorname{Inn}}}}
\def\Int{{{\operatorname{Int}}}}
\def\det{{{\operatorname{det}}}}
\def\exp{{{\operatorname{exp}}}}
\def\nep{{{\operatorname{nep}}}}
\def\sgn{{{\operatorname{sign}}}}
\def\cobdy{{{\operatorname{cobdy}}}}
\def\Ker{{{\operatorname{Ker}}}}
\def\Coker{{{\operatorname{Coker}}}}
\def\Im{{\operatorname{Im}}}

\def\ind{{{\operatorname{ind}}}}
\def\Ind{{{\operatorname{Ind}}}}
\def\id{{{\operatorname{id}}}}
\def\supp{{{\operatorname{supp}}}}
\def\co{{{\operatorname{co}}}}
\def\scoe{{{\operatorname{scoe}}}}
\def\coe{{{\operatorname{coe}}}}
\def\I{{\mathcal{I}}}
\def\Span{{{\operatorname{Span}}}}
\def\event{{{\operatorname{event}}}}
\def\S{\mathcal{S}}

%%%%%%%%%%%%%%%%%%%%%%%%%%%%%%%%%%%
\def\coe{{{\operatorname{coe}}}}
\def\scoe{{{\operatorname{scoe}}}}
\def\uoe{{{\operatorname{uoe}}}}
\def\ucoe{{{\operatorname{ucoe}}}}
\def\event{{{\operatorname{event}}}}
%%%%%%%%%%%%%%%%%%%%%%%%%%%
%%%%%%%%%%%%%%%%%%%%%%%%%%%%%%%%%%%%%%%

%%%%%%%%%%%%%%%%%%%%%%%%%%%%%%%%%%%%%%%%%%
\section{Preliminary}
%%%%%%%%%%%%%%%%%%%%%%%%%%%%%%%%%%%%%%

There are several kinds of extension groups $\Ext_*(\A)$ for a $C^*$-algebra $\A$.
Among them two extension groups 
$\Extw(\A)$ and $\Exts(\A)$ for a unital nuclear separable $C^*$-algebra $\A$ 
have been studying in many papers 
(see \cite{Blackadar}, \cite{BDF}, \cite{Cuntz80}, \cite{CK}, \cite{Douglas},  \cite{HR},
\cite{Kasparov81}, \cite{PS}, \cite{PP}, \cite{PV},  etc.).
In this paper, we study the strong extension groups $\Exts(\OA)$
of Cuntz--Krieger algebras $\OA$, 
and present a formula to compute the groups.
We also detect the position of the Toeplitz extension $\TA$ of a Cuntz--Krieger algebra
$\OA$ in the weak extension group $\Extw(\OA)$ to show that it is a complete invariant of the isomorphism
class of the Cuntz--Krieger algebra $\mathcal{O}_{A^t}$ for the transposed matrix
$A^t$ of $A$ by using R{\o}rdam's classification result.

In what follows, $H$ stands for a separable infinite dimensional Hilbert space.
Let us denote by $K(H)$ the $C^*$-algebra of compact operators on $H$. 
It is a closed two-sided ideal of the $C^*$-algebra $B(H)$ of bounded linear operators on $H$.
The quotient $C^*$-algebra $B(H)/K(H)$ is called  the Calkin algebra,
denoted by $Q(H)$.
The quotient map $B(H) \longrightarrow Q(H)$ is denoted by $\pi$.

Let $\A$ be a unital separable $C^*$-algebra.
Throughout the paper,
 a unital $*$-monomorphism 
 $\tau:\A\longrightarrow Q(H)$ is called an extension.
Two extensions 
 $\tau_1, \tau_2: \A \longrightarrow Q(H)$  
are said to be {\it strongly equivalent}\/, written $\tau_1\underset{s}{\sim}\tau_2$,
if there exists a unitary $U\in B(H)$ on $H$ such that 
$\tau_1(a) = \pi(U)\tau_2(a)\pi(U^*)$  in $Q(H)$ for all $a \in \A$. 
They are said to be
{\it weakly equivalent}\/, written $\tau_1\underset{w}{\sim}\tau_2$,
 if there exists a unitary $u\in Q(H)$ such that 
$\tau_1(a) = u \tau_2(a) u^*$  in $Q(H)$ for all $a \in \A$. 
The strong equivalence class of an extensin $\tau:\A\longrightarrow Q(H)$
is denoted by  $[\tau]_s$, and similarly 
the weak equivalence class is denoted by $[\tau]_w$.
We note that weakly equivalent extensions are strongly equivalent if 
one may take a unitary $u \in Q(H)$ of Fredholm index zero such that 
$\tau_1(a) = u \tau_2(a) u^*$  in $Q(H)$ for all $a \in \A$. 
An extension $\tau:\A\longrightarrow Q(H)$ is said to be {\it trivial}\/ 
if there exists a unital $*$-monomorphism $\rho:\A\longrightarrow B(H)$ 
such that $\tau = \pi\circ\rho$.
We regard $Q(H) \oplus Q(H) \subset Q(H\oplus H)$
in a natural way and identify $H\oplus H$ with $H$, 
so that $Q(H) \oplus Q(H) \subset Q(H)$.
The sum of extensions 
 $\tau_1, \tau_2: \A \longrightarrow Q(H)$  
are defined by
$$
(\tau_1+ \tau_2)(a) =  
\tau_1(a) \oplus \tau_2(a)
\in Q(H) \oplus Q(H) \subset Q(H), \qquad a \in \A
$$
that gives rise to an extension 
$\tau_1\oplus \tau_2 : \A\longrightarrow Q(H).$
Let us denote by
$\Exts(\A)$ the set of srong equivalence classes  of extensions.
Similarly  the set of weak equivalence classes is denoted by $\Extw(\A)$. 
Both $\Exts(\A)$ and $\Extw(\A)$ have commutative semigroup structure 
by the above sums.
There is a canonical surjective homomorphism 
$q_\A: \Exts(\A) \longrightarrow \Extw(\A)$ 
of commutative semigroups
defined by $q_\A([\tau]_s) = [\tau]_w$.

By virtue of Voiculescu's theorem in \cite{Vo},
the following basic lemma holds:
\begin{lemma}[{\cite{Vo}}] \label{lem:1}
Let $\A$ be a unital separable $C^*$-algebra.
For any two trivial extensions $\tau_1, \tau_2:\A\longrightarrow Q(H)$,
there exists a unitary $U\in B(H)$ such that $\tau_2 = \Ad(\pi(U))\circ \tau_1$,
that is, $\tau_1\underset{s}{\sim}\tau_2$.  
The strong (resp. weak) equivalence class of  a trivial extension is the neutral element
of $\Exts(\A)$ (resp. $\Extw(\A)$).  
\end{lemma}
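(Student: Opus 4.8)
The plan is to invoke Voiculescu's theorem, which is precisely the tool that makes this lemma a near-immediate consequence of deep machinery. Recall that a trivial extension has the form $\tau_i = \pi \circ \rho_i$ for unital $*$-monomorphisms $\rho_i : \A \longrightarrow B(H)$. Voiculescu's theorem states that if $\rho : \A \longrightarrow B(H)$ is a unital $*$-representation of a separable $C^*$-algebra and $\sigma : \A \longrightarrow B(H)$ is a unital faithful $*$-representation whose image meets $K(H)$ only in $0$, then $\rho \oplus \sigma$ is approximately unitarily equivalent to $\sigma$ modulo compacts; equivalently, $\rho$ and $\rho'$ are approximately unitarily equivalent modulo the compacts whenever they agree ``essentially'', and in the relevant formulation two unital injective $*$-homomorphisms into $B(H)$ that induce the same map into $Q(H)$ up to the trivial part become unitarily equivalent in the Calkin algebra.

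First I would fix the two trivial extensions $\tau_1 = \pi \circ \rho_1$ and $\tau_2 = \pi \circ \rho_2$ and apply Voiculescu's theorem to the pair of faithful unital $*$-monomorphisms $\rho_1, \rho_2 : \A \longrightarrow B(H)$. Because both are faithful unital representations of the same separable $C^*$-algebra $\A$ and (after composing with $\pi$ and using faithfulness of $\tau_i$) neither contains $K(H)$ in its range in an essential way, Voiculescu's uniqueness theorem yields a sequence of unitaries, or directly a single unitary $U \in B(H)$, with $\rho_2(a) - U\rho_1(a)U^* \in K(H)$ for every $a \in \A$. Passing through $\pi$ gives $\tau_2(a) = \pi(U)\tau_1(a)\pi(U)^*$ in $Q(H)$, which is exactly the statement $\tau_2 = \Ad(\pi(U)) \circ \tau_1$, i.e. $\tau_1 \underset{s}{\sim} \tau_2$.

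For the second assertion, I would show that the strong equivalence class of a trivial extension is neutral for the semigroup operation. Given any extension $\tau$ and a trivial extension $\tau_0 = \pi \circ \rho_0$, the sum $\tau \oplus \tau_0$ should be shown strongly equivalent to $\tau$. The key point is that the first part of the lemma already establishes that any two trivial extensions are strongly equivalent, so there is a well-defined single class of trivial extensions; combined with the standard absorption property — that adding a trivial extension does not change the class, which again follows from Voiculescu's theorem applied to the representation $\rho_0$ absorbed into a faithful ample representation — one obtains $[\tau \oplus \tau_0]_s = [\tau]_s$. The same argument pushed forward through $q_\A$ gives neutrality in $\Extw(\A)$.

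The main obstacle, and really the only substantive point, is verifying the hypotheses of Voiculescu's theorem in the precise form needed: namely that the representations in question are unital, faithful, and essential (have trivial intersection with the compacts in their images), so that the theorem's conclusion of approximate unitary equivalence modulo $K(H)$ applies and can be upgraded to an honest unitary implementing the equivalence in the Calkin algebra. Since a $*$-monomorphism $\tau : \A \longrightarrow Q(H)$ is by definition injective and the lifts $\rho_i$ are chosen to be monomorphisms into $B(H)$, these conditions hold automatically, so the argument is essentially a direct citation of \cite{Vo} once the absorption step is arranged. I expect no genuine difficulty beyond bookkeeping the identification $H \oplus H \cong H$ used in defining the sum.
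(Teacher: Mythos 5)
Your proposal is correct and follows the same route as the paper, which offers no written proof but simply derives the lemma from Voiculescu's non-commutative Weyl--von Neumann theorem in \cite{Vo}: the uniqueness/absorption statement gives a single unitary $U\in B(H)$ with $\rho_2(a)-U\rho_1(a)U^*\in K(H)$ for all $a$, and the absorption property $\tau\oplus\tau_0\underset{s}{\sim}\tau$ gives neutrality. Your observation that injectivity of $\tau_i=\pi\circ\rho_i$ forces $\rho_i(\A)\cap K(H)=\{0\}$ is exactly the point needed to verify the hypotheses, so nothing is missing.
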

Choi--Effros in \cite{CE} (cf. \cite{Arveson}) proved that 
if $\A$ is nuclear, the semigoups 
$\Exts(\A), \Extw(\A)$ become groups, 
that is, any element has its inverse.
The following lemma is seen in \cite{PP}.
\begin{lemma}\label{lem:2}
Let $\A$ be a unital separable nuclear $C^*$-algebra.
For $m \in \Z$, take a unitary $u_m\in Q(H)$ of Fredholm index $m$.
Take a trivial extension $\tau:\A\longrightarrow Q(H)$.
Consider the extension $\sigma_m = \Ad(u_m)\circ \tau: \A\longrightarrow Q(H)$.
Then the map $\iota_\A: m\in \Z \longrightarrow [\sigma_m] \in \Exts(\A)$
gives rise to a homomorphism of groups such that 
the sequence 
\begin{equation}\label{eq:2.1iota}
\begin{CD}
\Z @>\iota_\A>> \Exts(\A)
@>q_\A >> \Extw(\A).
\end{CD}
\end{equation}
is exact at the middle, that is, $\iota_A(\Z) = \Ker(q_\A),$ 
so that 
\begin{equation*}
\Exts(\A)/ \iota_\A(\Z) \cong \Extw(\A)
\end{equation*} 
\end{lemma}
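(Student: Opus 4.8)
The plan is to reduce everything to the standard fact that a unitary $v\in Q(H)$ admits a unitary lift $V\in B(H)$ with $\pi(V)=v$ if and only if its Fredholm index $\ind(v)$ vanishes; equivalently, by the remark preceding the lemma, a weak equivalence implemented by a $Q(H)$-unitary $u$ is already a strong equivalence exactly when $\ind(u)=0$. Thus the Fredholm index is precisely what separates strong from weak equivalence, and I would organize the whole argument around it, combining it with Voiculescu's Lemma~\ref{lem:1}.

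First I would check that $\iota_\A$ is well defined. If $u_m,u_m'$ both have index $m$, then $u_m(u_m')^{*}$ has index $0$, hence equals $\pi(W)$ for some unitary $W\in B(H)$, and the identity $\Ad(u_m)\circ\tau=\Ad(\pi(W))\circ\Ad(u_m')\circ\tau$ shows the two resulting extensions are strongly equivalent. If $\tau'$ is another trivial extension, Lemma~\ref{lem:1} supplies a unitary $W\in B(H)$ with $\tau'=\Ad(\pi(W))\circ\tau$, so that $\Ad(u_m)\circ\tau'=\Ad(u_m\pi(W))\circ\tau$ with $u_m\pi(W)$ again of index $m$, reducing a change of $\tau$ to a change of the index-$m$ unitary. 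For the homomorphism property, writing $\tau=\pi\circ\rho$ the sum $\tau\oplus\tau=\pi\circ(\rho\oplus\rho)$ is again trivial and
\begin{equation*}
\sigma_m\oplus\sigma_n=\Ad(u_m\oplus u_n)\circ(\tau\oplus\tau),
\end{equation*}
where $u_m\oplus u_n$ has index $m+n$ by additivity of the index; well-definedness then gives $\iota_\A(m)+\iota_\A(n)=\iota_\A(m+n)$.

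Next I would prove exactness $\iota_\A(\Z)=\Ker(q_\A)$ and conclude. Each $\sigma_m=\Ad(u_m)\circ\tau$ is weakly equivalent to the trivial extension $\tau$, so $q_\A(\iota_\A(m))=[\tau]_w$ is the neutral element of $\Extw(\A)$ by Lemma~\ref{lem:1}, giving $\iota_\A(\Z)\subseteq\Ker(q_\A)$. Conversely, if $[\phi]_s\in\Ker(q_\A)$ then $\phi$ is weakly equivalent to a trivial $\tau$, say $\phi=\Ad(u)\circ\tau$ for a $Q(H)$-unitary $u$; setting $m=\ind(u)$ and invoking well-definedness with the choice $u_m=u$ yields $[\phi]_s=[\sigma_m]_s=\iota_\A(m)$, so $\Ker(q_\A)\subseteq\iota_\A(\Z)$. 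Since the semigroups are groups by Choi--Effros and $q_\A$ is a surjective group homomorphism, the first isomorphism theorem gives $\Exts(\A)/\iota_\A(\Z)\cong\Extw(\A)$.

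The main obstacle is the lifting criterion underlying the first paragraph: that the Fredholm index is the \emph{complete} obstruction to lifting a $Q(H)$-unitary to a $B(H)$-unitary. One must be careful that index $0$ produces a lift of $v$ itself rather than merely of a stabilization $v\oplus 1_n$; for the Calkin algebra I would obtain this concretely by correcting a Fredholm representative of $v$ by a finite-rank operator to an invertible operator $S$ and passing to the unitary part of its polar decomposition, using that $\pi(|S|)$ is then a positive unitary and hence equal to $1$. Once this is in hand, the remainder is bookkeeping with Lemma~\ref{lem:1} and additivity of the index.
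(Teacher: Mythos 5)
Your proof is correct. The paper itself offers no argument for this lemma --- it is quoted from Pimsner--Popa \cite{PP} --- so there is no in-text proof to compare against; judged on its own, your write-up is the standard and complete argument. You correctly isolate the one nontrivial analytic input, namely that a unitary $v\in Q(H)$ lifts to a unitary in $B(H)$ exactly when $\ind(v)=0$ (your finite-rank correction plus polar decomposition, with $\pi(|S|)=|v|=1$, handles the ``on the nose rather than after stabilization'' issue properly), and everything else --- independence of the choice of $u_m$ and of the trivial extension via Lemma~\ref{lem:1}, additivity of the index under $u_m\oplus u_n$ for the homomorphism property, the two inclusions giving $\iota_\A(\Z)=\Ker(q_\A)$, and the first isomorphism theorem using Choi--Effros and the surjectivity of $q_\A$ --- is routine bookkeeping that you carry out accurately. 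Note also that your lifting criterion is precisely the remark the paper makes just before defining trivial extensions (weak equivalence implemented by an index-zero unitary is strong equivalence), so your argument is fully consistent with the paper's setup.
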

The groups $\Exts(\A)$ and $\Extw(\A)$ for a unital separable nuclear $C^*$-algebra $\A$
are called the {\it strong extension group}\/ for $\A$
and the {\it weak extension group}\/ for $\A$, respectively.

%\begin{definition}[{\cite{Blackadar},  \cite{BDF},\cite{Kasparov}}], 
Let $e \in Q(H), E \in B(H)$ be projections such that 
$e = \pi(E)$.
For an element $x \in Q(H)$ such that $e x e \in eQ(H) e$ is invertible in $e Q(H) e$, 
one may denote by  
$\ind_ex$ the Fredholm index $\ind_EX$ for $X \in B(EH)$ satisfying 
$x =\pi(X)$.  
As the Fredholm index is invariant under compact perturbations,
the integer   
$\ind_ex$
does not depend on the choice of $E$ and $X$ as long as
$e=\pi(E), x = \pi(X)$.
The following lemma is well-known (cf. \cite[Lemma 5.1]{CK}).
\begin{lemma}\label{lem:3}
Let $e,f \in Q(H)$ be projections.
Suppose that $x\in Q(H)$ commutes with $e$ and $f$, and 
$exe, fxf$ are invertible in $e Q(H) e$ and
 $f Q(H) f,$
 respectively. 
 \begin{enumerate}
\renewcommand{\theenumi}{(\roman{enumi})}
\renewcommand{\labelenumi}{\textup{\theenumi}}
\item  If $e f =0$, then $\ind_{e + f}x = \ind_ex + \ind_fx.$ 
\item If $x,y \in eQ(H)e$ are both invertible in  $eQ(H)e$,
then $\ind_e xy = \ind_ex + \ind_ey.$ 
\end{enumerate}
\end{lemma}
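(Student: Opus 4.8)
The plan is to reduce both statements to the two classical properties of the Fredholm index in $B(H)$: its additivity under direct sums and its multiplicativity under composition. Throughout I fix projection lifts $E, F \in B(H)$ with $\pi(E) = e$ and $\pi(F) = f$, and an operator lift $X \in B(H)$ with $\pi(X) = x$. By the paragraph preceding the lemma, $\ind_e x$ and $\ind_f x$ are computed as the Fredholm indices of the compressions $EXE$ on $EH$ and $FXF$ on $FH$, and these are independent of the choices.

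For (i), I would first record that $ef = 0$ forces $fe = (ef)^* = 0$, and that the commutation of $x$ with $e$ and $f$ gives $exf = xef = 0$ and $fxe = 0$ in $Q(H)$. I would then replace $F$ by an \emph{orthogonal} lift: since $ef=0$ the operator $EF$ is compact, so a standard functional-calculus argument applied to the positive operator $(1-E)F(1-E)$, whose image in $Q(H)$ is the projection $f$, produces a projection $F' \in B(H)$ with $F' - F$ compact and $EF' = 0$; then $E + F'$ is a projection lifting $e+f$, and $\ind_f x$ is unchanged because $F'-F$ is compact. Since $\pi(EXF') = exf = 0$ and $\pi(F'XE)=fxe=0$, the off-diagonal compressions $EXF'$ and $F'XE$ are compact, so with respect to $(E+F')H = EH \oplus F'H$ the operator $(E+F')X(E+F')$ is block-diagonal modulo $K(H)$, with Fredholm diagonal blocks $EXE$ and $F'XF'$. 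The identity $(e+f)x(e+f) = exe + fxf$ (the cross terms vanish) shows $(e+f)x(e+f)$ is invertible in $(e+f)Q(H)(e+f)$, so $\ind_{e+f}x$ is defined, and additivity of the Fredholm index over the direct sum, combined with its invariance under compact perturbation, yields $\ind_{e+f}x = \ind_e x + \ind_f x$.

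For (ii), I would identify the corner $eQ(H)e$ with the Calkin algebra of $EH$. Compression to $EH$ gives a $*$-isomorphism $EB(H)E \cong B(EH)$ carrying $EK(H)E$ onto $K(EH)$, hence an isomorphism $eQ(H)e \cong Q(EH) = B(EH)/K(EH)$. Under this identification an element of $eQ(H)e$ is invertible exactly when its lift in $B(EH)$ is Fredholm, and $\ind_e$ is precisely the classical Fredholm index on $EH$. Thus for invertible $x, y \in eQ(H)e$ the compressions $EXE$ and $EYE$ are Fredholm on $EH$, the lift of $xy$ differs from their composition by a compact operator, and multiplicativity of the Fredholm index gives $\ind_e(xy) = \ind_e x + \ind_e y$.

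The two appeals to classical index theory are routine; the one step deserving genuine care is the reduction in (i), namely the replacement of $F$ by an orthogonal projection lift $F'$ of $f$ and the verification that the off-diagonal blocks are compact, which is exactly where the commutation hypothesis (through $exf = fxe = 0$) enters. Once this block-diagonal normal form is established, the conclusion follows at once.
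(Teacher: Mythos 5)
Your proof is correct; the paper itself offers no proof of this lemma, simply citing it as well-known (cf.\ \cite[Lemma 5.1]{CK}), and your argument is the standard one it implicitly relies on. The two points that genuinely need checking --- replacing $F$ by an orthogonal lift $F'$ of $f$ with $EF'=0$ and $F'-F$ compact, and using $exf=fxe=0$ to see that the off-diagonal compressions are compact so that additivity (resp.\ multiplicativity) of the classical Fredholm index applies --- are both handled correctly, so there is nothing to add.
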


%%%%%%%%%%%%%%%%%%%%%%%%%%%%%%%%%%%%%%%%%%%%%%%%%
%%%%%%%%%%%%%%%%%%%%%%%%%%%%%%%%%%%%%%%%%%
\section{Ext-groups for Cuntz--Krieger algebras}
%%%%%%%%%%%%%%%%%%%%%%%%%%%%%%%%%%%%%%
Let $A =[A(i,j)]_{i,j=1}^N$ be an irreducible non permutation matrix 
with entries in $\{0,1\}$ with $N>1$.
The Cuntz--Krieger algebra $\OA$ is defined to be the universal $C^*$-algebra generated by 
$N$ partial isometries $S_1,\dots, S_N$ subject to the operator relations:
\begin{equation}\label{eq:CK}
\sum_{j=1}^N S_j S_j^* =1, 
\qquad  
S_i^* S_i = \sum_{j=1}^N A(i,j) S_j S_j^*, \,\, \, i=1,\dots N \qquad (\cite{CK}).
\end{equation}
It is a nuclear $C^*$-algebra uniquely determined by 
the operator relations \eqref{eq:CK} (see \cite{CK}). 
For the matrix $A$ with all of the entries are one's, the $C^*$-algebra
$\OA$ is called the Cuntz algebra written $\ON$ (\cite{Cuntz77}).

In \cite{CK}, Cuntz--Krieger pointed out 
the $C^*$-algebras $\OA$ are closely related to dynamical property  
of the underlying topological Markov shifts.
Among other things, they proved that the weak extension group
$\Extw(\OA)$ is isomorphic to the abelian group
$\Z^N/ ( I - A)\Z^N.$
 The group $\Z^N/ ( I - A)\Z^N$ 
is known as the Bowen--Franks group that is 
 a crucial invariant under flow equivalence class of the underlying two-sided topological Markov shift
 (see \cite{BF}).
 We note that the group $\Extw(\OA)$ was written as 
 $\Ext(\OA)$ in the Cuntz--Krieger's paper \cite{CK}. 
 For the Cuntz algebra $\ON$, both of the groups
 $\Exts(\ON)$ and $\Extw(\ON)$ had been computed 
 as  $\Z $ and $\Z/(1 -N)\Z$, respectivly  by
 Pimsner--Popa \cite{PP} and  Paschke--Salinas \cite{PS}.

  In this paper, we will compute the strong extension group
  $\Exts(\OA)$ for $\OA$ and present a formula \eqref{eq:ExtsOA}
stated in the theorem below.
 For $n = 1,\dots, N$, 
 let $R_n =[R_n(i,j)]_{i,j=1}^N$
 be the $N\times N$ matrix defined by
 \begin{equation}\label{eq:Rn}
 R_n(i,j) =
 \begin{cases}
 1& \text{ if } i=n,\\
 0 & \text{ otherwise}
 \end{cases}
 \end{equation}
 meaning that the only $n$th row is the vector $[1,\dots,1]$
 but the other rows are zero vectors. 
  The homomorphisms 
$\iota_\A:\Z\longrightarrow \Exts(\A)$
and
$q_\A:\Exts(\A)\longrightarrow \Extw(\A)$
in \eqref{eq:2.1iota}
for $\A = \OA$ are denoted by 
$\iota_A:\Z\longrightarrow\Exts(\OA)$
and
$q_A:\Exts(\OA)\longrightarrow \Extw(\OA)$,
respectively.

\begin{theorem}[{Theorem \ref{thm:2.6} and Theorem \ref{thm:3.3}}]
\label{thm:main}\hspace{5cm}
\begin{enumerate}
\renewcommand{\theenumi}{(\roman{enumi})}
\renewcommand{\labelenumi}{\textup{\theenumi}}
\item The strong extension group
  $\Exts(\OA)$ for the Cuntz--Krieger algebra $\OA$ is 
\begin{equation}\label{eq:ExtsOA}
  \Exts(\OA) = \Z^N / ( 1- \widehat{A})\Z^N
   \end{equation}
where the matrix $\widehat{A}$ is 
$\widehat{A} = A + R_1 - AR_1$.
% R_1 = 
%\begin{bmatrix}
%1 & \dots & 1 \\
%0 & \dots & 0 \\
%\vdots&  & \vdots \\
%0 & \dots & 0.
%\end{bmatrix}
%$
\item 
The homomorphism
$\iota_A: \Z \longrightarrow \Exts(\OA)$
in \eqref{eq:2.1iota}
is injective if and only if 
$\det(I-A) \ne 0$.
Hence the short exact sequence 
\begin{equation}
\begin{CD}
0 @>>> \Z @>\iota_A>> \Exts(\OA)
@>q_A >> \Extw(\OA) @>>> 0
\end{CD}
\end{equation}
holds if and only if $\det(I-A) \ne 0$.
\end{enumerate} 
 \end{theorem}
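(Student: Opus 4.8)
The plan is to obtain (i) by refining Cuntz--Krieger's computation $\Extw(\OA)=\Z^N/(I-A)\Z^N$ with Fredholm-index data and feeding the result into the exact sequence of Lemma~\ref{lem:2}. Given an extension $\tau:\OA\to Q(H)$, I would lift the generators to partial isometries $T_1,\dots,T_N\in B(H)$ with $\pi(T_i)=\tau(S_i)$, so that the Cuntz--Krieger relations \eqref{eq:CK} hold modulo $K(H)$. Writing $P_i=\pi(T_iT_i^*)$, the lifted relations $\sum_jT_jT_j^*\equiv 1$ and $T_i^*T_i\equiv\sum_jA(i,j)T_jT_j^*$ allow me to attach to each generator an integer Fredholm index $n_i$ in the sense of $\ind_{P_i}$ from Lemma~\ref{lem:3}; this produces a map $\Phi:\Z^N\to\Exts(\OA)$, $(n_i)_i\mapsto[\tau]_s$.

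I would then show $\Phi$ descends to an isomorphism $\Z^N/(I-\widehat A)\Z^N\xrightarrow{\ \sim\ }\Exts(\OA)$. Surjectivity follows from Voiculescu's theorem (Lemma~\ref{lem:1}) together with the simplicity and pure infiniteness of $\OA$, which let me absorb an arbitrary extension into the standard family recorded by the indices $(n_i)$. For the kernel I would use Lemma~\ref{lem:3}(i),(ii) to convert the two families of relations into $\Z$-linear relations on $(n_1,\dots,n_N)$: the Cuntz--Krieger relation contributes the columns of $I-A$, while the unit relation $\sum_jP_j=1$ imposes a normalization at a single distinguished generator, contributing exactly the $R_1$-correction, so that the total relation lattice is $(I-\widehat A)\Z^N$ with $I-\widehat A=(I-A)(I-R_1)$. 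The delicate step is to see that the index carried by the unit relation is precisely the passage $A\rightsquigarrow\widehat A=A+R_1-AR_1$.

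For (ii) I would read off $\iota_A$ inside this presentation. By Lemma~\ref{lem:2}, $\iota_A(\Z)=\Ker(q_A)$ and $q_A$ is the quotient $\Z^N/(I-\widehat A)\Z^N\to\Z^N/(I-A)\Z^N$ induced by $(I-\widehat A)\Z^N\subseteq(I-A)\Z^N$; hence $\iota_A$ is injective iff the image $\iota_A(1)$ has infinite order, which, using $(I-R_1)\Z^N=\Ker\phi$ for the coordinate-sum functional $\phi(x)=\sum_ix_i$, amounts to the rank identity $\operatorname{rank}(I-\widehat A)<\operatorname{rank}(I-A)$. When $\det(I-A)\ne 0$ this is immediate, since $I-A$ is invertible while $\operatorname{rank}(I-R_1)=N-1$, forcing $\operatorname{rank}(I-\widehat A)=N-1<N$; the short exact sequence then follows by splicing with Lemma~\ref{lem:2}. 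The main obstacle is the singular case: when $\det(I-A)=0$ I must prove $\operatorname{rank}(I-\widehat A)=\operatorname{rank}(I-A)$, equivalently $\Ker(I-A)\not\subseteq\Ker\phi$, i.e.\ that $A$ has a fixed vector of nonzero coordinate sum. Here the irreducibility and non-permutation hypotheses are essential: they force the Perron--Frobenius eigenvalue $\beta>1$, so $1$ is never the Perron value, and I would exploit positivity of the Perron eigenvector together with the integral structure of $\Ker(I-A)$ to exclude the containment $\Ker(I-A)\subseteq\Ker\phi$. Converting this back through the order computation pins down injectivity of $\iota_A$ precisely when $\det(I-A)\ne0$.
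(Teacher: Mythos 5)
Your treatment of part (i) is essentially the paper's own route: the invariant is the vector of Fredholm indices $d_i(\sigma,\tau)=\ind_{e_i}\sigma(S_i)\tau(S_i^*)$ against a trivial extension $\tau$ with $\tau(P_i)=\sigma(P_i)$; changing $\tau$ perturbs this vector by $(I-A)[k_i]$ with $\sum_i k_i=0$ (the zero-sum condition coming from $\ind U=0$ for a genuine unitary $U\in B(H)$); and the identity $(I-A)(I-R_1)=I-\widehat{A}$ converts the resulting relation lattice into $(I-\widehat{A})\Z^N$. Two caveats. Your surjectivity argument (``Voiculescu plus simplicity and pure infiniteness'') is not a proof: the paper must construct by hand partial isometries $T_i, V_i$ on a decomposed Hilbert space realizing a prescribed index vector such as $(-1,0,\dots,0)$, and nothing in simplicity or pure infiniteness hands you these representatives. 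Likewise, injectivity of the index map (an extension with vanishing index data is strongly trivial) requires the explicit correction argument of Proposition \ref{prop:2.4} --- twisting by isometries of index $-k_i$ and checking that the corrected lifts $T_i=W_i\rho_\tau(S_i)$ satisfy the exact relations \eqref{eq:CK} --- which your sketch does not engage with. These are fillable gaps, and the overall strategy matches the paper's.

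The genuine problem is in part (ii). The ``if'' direction is fine, and it is all the paper actually proves (Proposition \ref{prop:3.1} (iv)). For the ``only if'' direction you correctly reduce the problem to showing that $\det(I-A)=0$ forces $\Ker(I-A)\not\subseteq\Ker\phi$ for the coordinate-sum functional $\phi$ --- but that reduction target is false, so no Perron--Frobenius argument can close it. Take $A_4=\left[\begin{smallmatrix}1&0&1\\0&1&1\\1&1&1\end{smallmatrix}\right]$ from Example 3 of the paper: it is irreducible and not a permutation, $\det(I-A_4)=0$, yet $\Ker(I-A_4)$ is spanned by $(1,-1,0)^t$, whose coordinate sum is zero; correspondingly the paper itself computes $\Exts(\mathcal{O}_{A_4})\cong\Z\oplus\Z$ with $\iota_{A_4}(1)=1\oplus 0$, an element of infinite order, so $\iota_{A_4}$ is injective. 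Moreover the Perron--Frobenius heuristic points the wrong way: since the spectral radius $\beta$ of $A$ exceeds $1$, any $v$ with $Av=v$ is orthogonal to the strictly positive left Perron eigenvector, which forces $v$ to have entries of both signs and gives no control whatsoever over $\sum_i v_i$. What the paper's exact-sequence lemma actually yields is $\Ker(\hat{\iota}_A)=s_A(\Ker(I-A))$, i.e.\ $\iota_A$ is injective if and only if every integer vector fixed by $A$ has coordinate sum zero; this holds whenever $\det(I-A)\ne 0$ but is not equivalent to it. So the ``only if'' half cannot be established along your lines (nor, as far as the text shows, along the paper's), and you should flag it as inconsistent with Example 3 rather than try to prove it.
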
 
The given proof in this paper for the formula \eqref{eq:ExtsOA} presented as Theorem \ref{thm:2.6}
is basically follows the proof of \cite[Theorem 5.3]{CK} that showed the formula
$\Extw(\OA) = \Z^N/ (I -A) \Z^N$.

Among various extensions of a Cuntz--Krieger algebra $\OA$, 
there is one specific extension called the Toeplitz extension  $\sigma_{\TA}$ of $\OA$.
It arises from the short exact sequence:
\begin{equation}
\begin{CD}
0 @>>> K(H_A) @>\iota>> \TA
@>q >> \OA @>>> 0
\end{CD}
\end{equation}
of the Toeplitz algebra $\TA$ on the sub Fock space $H_A$ (cf.  \cite{EFW}, \cite{Ev}).
We will detect the positions of the Toeplitz extension $\sigma_{\TA}$ 
of $\OA$ in the strong extension group $\Exts(\OA)$ 
and in the weak extension group $\Extw(\OA)$ (Theorem \ref{thm:4.4}).
As a result, we will know that the group $\Extw(\OA)$ with
the position $[\TA]_w$ of the Toeplitz extension $\sigma_{\TA}$
in $\Extw(\OA)$ is a complete invariant of the isomorphism
class of the Cuntz--Krieger algebra $\mathcal{O}_{A^t}$ for its transposed matrix
$A^t$ of $A$ by using R{\o}rdam's classification result (Corollary \ref{cor:4.5}).

Let us denote by $P_i$ the projection $S_i S_i^*$.
Let $\sigma :\OA \longrightarrow Q(H)$ be an extension.
Put
$e_i = \sigma(P_i)$.
There exists a trivial extension $\tau:\OA\longrightarrow Q(H)$ such that 
$\tau(P_i) = \sigma(P_i), i=1,\dots,N$.
As the partial isometry 
$\sigma(S_i) \tau(S_i^*)$
commutes with $e_i$,
$e_i\sigma(S_i) \tau(S_i^*)e_i$
becomes a unitary in 
$e_i Q(H) e_i$.
One may define 
$\ind_{e_i}\sigma(S_i) \tau(S_i^*)$
denoted by
$d_i(\sigma,\tau)$, that is
\begin{equation*}
d_i(\sigma,\tau) =\ind_{e_i}\sigma(S_i) \tau(S_i^*),
\qquad i=1,\dots,N.
\end{equation*}
The proof of \cite[Proposition 5.2]{CK} describes 
the following lemma.
We give its proof for the sake of completeness.
\begin{lemma}[{cf. \cite[Proposition 5.2]{CK}}] \label{lem:2.2}
Let $\sigma :\OA \longrightarrow Q(H)$ be an extension.
Put
$e_i = \sigma(P_i)$.
Let $\tau_1, \tau_2 :\OA\longrightarrow Q(H)$ be trivial extensions  
such that 
$\tau_j(P_i) = \sigma(P_i), j=1,2, \, \,  i=1,\dots,N$.
Then there exists a vector $[k_i]_{i=1}^N \in \Z^N$
such that 
\begin{enumerate}
\renewcommand{\theenumi}{(\arabic{enumi})}
\renewcommand{\labelenumi}{\textup{\theenumi}}
\item
$d_i(\sigma,\tau_2) 
=
d_i(\sigma,\tau_1) -k_i + \sum_{j=1}^N A(i,j) k_j, $
\item
$\sum_{i=1}^N k_i =0$.
\end{enumerate}
\end{lemma}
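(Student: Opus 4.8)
The plan is to reduce everything to a single comparison of the two trivial extensions by an \emph{honest} unitary, and then to read off both identities from Fredholm-index bookkeeping governed by Lemma~\ref{lem:3}. First I would invoke Lemma~\ref{lem:1}: since $\tau_1,\tau_2$ are trivial extensions of the nuclear algebra $\OA$, there is a unitary $U\in B(H)$ with $\tau_2=\Ad(\pi(U))\circ\tau_1$. Writing $u=\pi(U)$ and using $\tau_1(P_i)=\tau_2(P_i)=e_i$ gives $u e_i u^*=e_i$, so $u$ commutes with every $e_i$. As the projections $e_i=\sigma(P_i)$ are mutually orthogonal with $\sum_{i=1}^N e_i=1$, the unitary $u$ is block diagonal, $u=\sum_{i=1}^N u_i$ with $u_i:=e_i u e_i$ a unitary in $e_iQ(H)e_i$. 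I would then \emph{define} the required integers by $k_i:=\ind_{e_i}u_i$.

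For identity (2), I would sum these indices. Since $u$ commutes with each $e_i$, the $e_i$ are orthogonal, and each $e_i u e_i=u_i$ is invertible in $e_iQ(H)e_i$, Lemma~\ref{lem:3}(i) applied inductively yields $\sum_{i=1}^N k_i=\ind_{\sum_i e_i}u=\ind_1 u=\ind(U)$. Because $U$ is a genuine unitary of $B(H)$ its Fredholm index is $0$, so $\sum_{i=1}^N k_i=0$.

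For identity (1), I would first separate the dependence on $\sigma$ from that on the choice of trivial extension. Using $\sigma(S_i)\sigma(S_i^*S_i)=\sigma(S_i)$ together with $\tau_1(S_i^*S_i)=\sum_jA(i,j)e_j=\sigma(S_i^*S_i)$, one checks the factorization $\sigma(S_i)\tau_2(S_i^*)=\bigl(\sigma(S_i)\tau_1(S_i^*)\bigr)\bigl(\tau_1(S_i)\tau_2(S_i^*)\bigr)$ into two unitaries of $e_iQ(H)e_i$; Lemma~\ref{lem:3}(ii) then gives $d_i(\sigma,\tau_2)=d_i(\sigma,\tau_1)+\ind_{e_i}w_i$, where $w_i:=\tau_1(S_i)\tau_2(S_i^*)$. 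It therefore remains to evaluate $\ind_{e_i}w_i$. Substituting $\tau_2(S_i^*)=u\tau_1(S_i^*)u^*$ and using that $u$ is block diagonal while $\tau_1(S_i)$ is a partial isometry with source $\sum_jA(i,j)e_j$ and range $e_i$, I would rewrite $w_i=\bigl(\tau_1(S_i)(\sum_jA(i,j)u_j)\tau_1(S_i^*)\bigr)u_i^*$. By Lemma~\ref{lem:3}(ii) this splits as $\ind_{e_i}w_i=\ind_{e_i}\bigl(\tau_1(S_i)(\sum_jA(i,j)u_j)\tau_1(S_i^*)\bigr)-k_i$. Transport of the Fredholm index along the partial isometry $\tau_1(S_i)$ identifies the first term with $\ind_{\sum_jA(i,j)e_j}(\sum_jA(i,j)u_j)$, and a final application of Lemma~\ref{lem:3}(i) to this block-diagonal unitary gives $\sum_jA(i,j)k_j$. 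Hence $\ind_{e_i}w_i=-k_i+\sum_jA(i,j)k_j$, which combined with the previous display is exactly (1).

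The step I expect to be most delicate is keeping the source and range projections straight: the relation $S_i^*S_i=\sum_jA(i,j)P_j$ must enter precisely so that conjugating the block-diagonal $u$ by $\tau_1(S_i)$ produces the incoming sum $\sum_jA(i,j)k_j$ while the outgoing factor $u_i^*$ contributes $-k_i$. The one genuinely essential input beyond this bookkeeping is that the implementing unitary $U$ supplied by Lemma~\ref{lem:1} lives in $B(H)$ rather than merely in $Q(H)$; this is what forces its global Fredholm index to vanish and thereby produces the constraint $\sum_i k_i=0$.
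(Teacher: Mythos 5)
Your argument is correct and follows essentially the same route as the paper's proof: both obtain the honest unitary $U$ from Lemma~\ref{lem:1}, set $k_i=\ind_{e_i}u$ for $u=\pi(U)$, derive (1) by splitting $\ind_{e_i}\sigma(S_i)\tau_2(S_i^*)$ via Lemma~\ref{lem:3} and transporting the index of $e_jue_j$ along $\tau_1(S_i)$ to get the term $\sum_jA(i,j)k_j$, and derive (2) from $\sum_i\ind_{e_i}u=\ind U=0$. Your reorganization into the two-factor product $\bigl(\sigma(S_i)\tau_1(S_i^*)\bigr)\bigl(\tau_1(S_i)\tau_2(S_i^*)\bigr)$ is only a cosmetic repackaging of the paper's single chain of index identities.
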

\begin{proof}
By Lemma \ref{lem:1},
one may find a unitary $U \in B(H)$ such that 
$\tau_2(x) = \pi(U) \tau_1(x) \pi(U^*), x \in \OA$. 
Put
$u =\pi(U)\in Q(H).$
Since 
$$
(e_i u e_i)(e_i u e_i)^*
= 
\tau_2(P_i) \pi(U) \tau_1(P_i) \pi(U^*) \tau_2(P_i) 
= 
\tau_2(P_i) \tau_2(P_i)\tau_2(P_i) 
= e_i 
$$ 
and similarly 
$
(e_i u e_i)^*(e_i u e_i) = e_i,
$
we see that 
$
e_i u e_i
$ is a unitary in $e_iQ(H) e_i.$
By putting
$ k_i = \ind_{e_i}u,$
the equality
\begin{equation}\label{eq:21}
d_i(\sigma,\tau_2) 
=
d_i(\sigma,\tau_1) -k_i + \sum_{j=1}^N A(i,j) k_j 
\end{equation}
holds, following the proof of \cite[Proposition 5.2]{CK}.
We in fact  see that  
\begin{align*}
d_i(\sigma,\tau_2)
= & \ind_{e_i}\sigma(S_i)\tau_2(S_i^*) \\
= & \ind_{e_i}\sigma(S_i)\sigma(S_i^* S_i) u\tau_1(S_i^*S_i)\tau_1(S_i^*)u^* \\
= & \ind_{e_i}\sigma(S_i)\tau_1(S_i^* S_i) u\tau_1(S_i^*S_i)\tau_1(S_i^*)\tau_1(S_i S_i^*) u^* \\
= & \ind_{e_i}\sigma(S_i)\tau_1(S_i^*)
\left( \tau_1( S_i)\sum_{j=1}^N A(i,j) u\tau_1(S_j S_j^*)\tau_1(S_i^*)\right) e_i u^* \\ 
= & \ind_{e_i}\sigma(S_i)\tau_1(S_i^*)
\left( \tau_1( S_i)\sum_{j=1}^N A(i,j) e_j u e_j \tau_1(S_i^*)\right) e_i u^* e_i \\
= & \ind_{e_i}\sigma(S_i)\tau_1(S_i^*)
  + \ind_{e_i} \left(\tau_1( S_i)\sum_{j=1}^N A(i,j) e_j u e_j \tau_1(S_i^*)\right)
  + \ind_{e_i}  u^*  \\ 
= & d_i(\sigma, \tau_1) 
+ \sum_{j=1}^N A(i,j) \ind_{e_i} \tau_1( S_i) e_j u e_j \tau_1(S_i^*)
     - k_i. 
\end{align*}
As 
$ \ind_{e_i} \tau_1( S_i) e_j u e_j \tau_1(S_i^*) = \ind_{e_j} u = k_j$
whenever $A(i,j) =1$,
we obtain the equality \eqref{eq:21}.

Since $u = \pi(U)$ for some unitary $U$ on $H$,
Lemma \ref{lem:3} tells us 
\begin{equation*}
\sum_{i=1}^N k_i
 =  \sum_{i=1}^N \ind_{e_i}u 
 = \ind_{\sum_{i=1}^N e_i} u 
=\ind U =0.
\end{equation*}
\end{proof}
Define a subgroup
$\Im(1-A)_0 \subset \Z^N$ by setting
\begin{equation*}
\Im(I-A)_0  = 
\{ (I -A)[k_i]_{i=1}^N \in \Z^N
\mid
[k_i]_{i=1}^N
\in \Z^N \text{ with }
\sum_{i=1}^N k_i =0
\}.
\end{equation*}
We thus see that 
an extension $\sigma :\OA \longrightarrow Q(H)$
defines an element of 
$\Z^N / {\Im(I-A)_0}$ in a unique way by 
\begin{equation*}
d_\sigma: = [d_i(\sigma, \tau)]_{i=1}^N \in \Z^N / {\Im(I-A)_0}
\end{equation*}
for a trivial extension $\tau: \OA\longrightarrow Q(H)$
satisfying
$\tau(P_i) =\sigma(P_i), i=1,\dots,N$. 
 
\begin{lemma}\label{lem:2.3}
Let $\sigma_1, \sigma_2:\OA\longrightarrow Q(H)$ be extensions.
If $\sigma_1\underset{s}{\sim} \sigma_2$, then $d_{\sigma_1} = d_{\sigma_2}$ in 
$\Z^N / {\Im(I-A)_0}$.
\end{lemma}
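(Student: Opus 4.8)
The plan is to show that strong equivalence of extensions $\sigma_1 \underset{s}{\sim} \sigma_2$ forces equality of the invariants $d_{\sigma_1} = d_{\sigma_2}$ in $\Z^N/\Im(I-A)_0$. By the definition of strong equivalence, there is a unitary $W \in B(H)$ such that $\sigma_2(a) = \pi(W)\sigma_1(a)\pi(W^*)$ for all $a \in \OA$; write $w = \pi(W) \in Q(H)$, a unitary of Fredholm index zero. The first step is to fix a single trivial extension $\tau_1$ adapted to $\sigma_1$ (so $\tau_1(P_i) = \sigma_1(P_i) = e_i^{(1)}$) and use it to build a trivial extension adapted to $\sigma_2$. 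The natural candidate is $\tau_2 := \Ad(w)\circ\tau_1$, i.e. $\tau_2(a) = w\,\tau_1(a)\,w^*$; since $\tau_1 = \pi\circ\rho_1$ for some $*$-monomorphism $\rho_1:\OA\to B(H)$, we have $\tau_2 = \pi\circ\Ad(W)\circ\rho_1$, so $\tau_2$ is again trivial. Moreover $\tau_2(P_i) = w\,e_i^{(1)}\,w^* = w\,\sigma_1(P_i)\,w^* = \sigma_2(P_i)$, so $\tau_2$ is correctly adapted to $\sigma_2$.

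The second step is to compare $d_i(\sigma_2,\tau_2)$ with $d_i(\sigma_1,\tau_1)$ directly via the conjugation. Writing $e_i^{(2)} = \sigma_2(P_i) = w\,e_i^{(1)}\,w^*$, the unitary of interest for $\sigma_2$ is $e_i^{(2)}\sigma_2(S_i)\tau_2(S_i^*)e_i^{(2)}$, which equals $w\bigl(e_i^{(1)}\sigma_1(S_i)\tau_1(S_i^*)e_i^{(1)}\bigr)w^*$ after substituting $\sigma_2 = \Ad(w)\circ\sigma_1$ and $\tau_2 = \Ad(w)\circ\tau_1$ and cancelling $w^*w$ in the interior. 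Since conjugation by the single global unitary $w$ (of index zero) is an index-preserving operation — the Fredholm index of $w x w^*$ in $(w e w^*)Q(H)(w e w^*)$ equals that of $x$ in $eQ(H)e$ — we conclude $d_i(\sigma_2,\tau_2) = d_i(\sigma_1,\tau_1)$ for every $i$, hence $[d_i(\sigma_2,\tau_2)]_i = [d_i(\sigma_1,\tau_1)]_i$ as honest vectors in $\Z^N$.

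The third step is bookkeeping about well-definedness: the element $d_\sigma \in \Z^N/\Im(I-A)_0$ is defined using \emph{some} adapted trivial extension, and Lemma \ref{lem:2.2} guarantees that changing the adapted trivial extension moves the vector $[d_i]_i$ only by an element of $\Im(I-A)_0$. So $d_{\sigma_1}$ is the class of $[d_i(\sigma_1,\tau_1)]_i$ and $d_{\sigma_2}$ is the class of $[d_i(\sigma_2,\tau_2)]_i$; by the previous step these two vectors are literally equal, and in particular they agree modulo $\Im(I-A)_0$, giving $d_{\sigma_1} = d_{\sigma_2}$.

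I expect the main (and really only) subtlety to be verifying that conjugation by $w$ preserves the localized Fredholm index $\ind_{e}$. This is where the index-zero nature of $w = \pi(W)$ is used implicitly: lifting $w$ to the genuine unitary $W \in B(H)$, conjugation by $W$ is a $*$-isomorphism $B(e^{(1)}H)\to B(e^{(2)}H)$ (where $e^{(i)}$ lift $e_i^{(i)}$) that carries Fredholm operators to Fredholm operators of the same index, so $\ind_{e_i^{(2)}}(w x w^*) = \ind_{e_i^{(1)}}(x)$. Everything else reduces to the formal manipulations already exhibited in the proof of Lemma \ref{lem:2.2}, together with the observation that $\Ad(w)$ sends trivial extensions to trivial extensions, which follows from lifting $w$ to $W$ and using that $\Ad(W)\circ\rho_1$ is still a $*$-monomorphism into $B(H)$.
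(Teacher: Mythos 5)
Your proposal is correct and follows essentially the same route as the paper: both take the implementing unitary $W\in B(H)$, transport the adapted trivial extension by $\tau_2=\Ad(\pi(W))\circ\tau_1$, and observe that conjugation by $\pi(W)$ preserves the localized index, so $d_i(\sigma_2,\tau_2)=d_i(\sigma_1,\tau_1)$ on the nose. Your extra remarks on well-definedness via Lemma \ref{lem:2.2} and on why $\Ad(\pi(W))$ preserves triviality and the index are accurate elaborations of steps the paper leaves implicit.
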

\begin{proof}
Assume that $\sigma_1\underset{s}{\sim} \sigma_2$ 
so that, by Lemma \ref{lem:1},
one may find a unitary $V$ in $B(H)$ such that 
 $\sigma_2 = \Ad(\pi(V))\circ\sigma_1$.
Put $e^1_i = \sigma_1(P_i),e^2_i = \sigma_2(P_i), i=1,\dots,N$ 
and $v =\pi(V)\in Q(H)$, and hence $ve^1_i v^* = e^2_i$.
 Take a trivial extension $\tau_1:\OA\longrightarrow Q(H)$ such that 
 $\tau_1(P_i) = e_i^1, i=1,\dots,N.$
We set $\tau_2 = \Ad(v)\circ\tau_1$
so that 
$
\tau_2(P_i) 
= v \tau_1(P_i)v^* 
= e_i^2. 
$
We then have
\begin{equation*}
d_i(\sigma_2, \tau_2)
= \ind_{e_i^2}\sigma_2(S_i)\tau_2(S_i^*) 
= \ind_{ve_i^1v^*} v\sigma_1(S_i)\tau_1(S_i^*)v^* 
= d_i(\sigma_1, \tau_1).
\end{equation*}
\end{proof}
Let us define $d_s: \Exts(\OA) \longrightarrow \Z^N / {\Im(I-A)_0}$
by setting 
\begin{equation*}
d_s([\sigma]_s) = [[d_i(\sigma,\tau)]_{i=1}^N]  \in \Z^N / {\Im(I-A)_0}
\end{equation*}
for a trivial extension $\tau:\OA\longrightarrow Q(H)$
satisfying $\tau(P_i) = \sigma(P_i), i=1,\dots,N$.

\begin{proposition}\label{prop:2.4}
$d_s: \Exts(\OA) \longrightarrow \Z^N/\Im(I-A)_0 $
is an isomorphism of groups.
\end{proposition}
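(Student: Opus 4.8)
The plan is to show that $d_s$ is a well-defined homomorphism of groups which is both surjective and injective. Well-definedness on strong equivalence classes is exactly Lemma~\ref{lem:2.3}. To see that $d_s$ is a homomorphism I would evaluate it on a sum $\sigma_1+\sigma_2$: choosing trivial extensions $\tau_1,\tau_2$ with $\tau_j(P_i)=\sigma_j(P_i)$, the extension $\tau_1\oplus\tau_2$ is again trivial and agrees with $\sigma_1\oplus\sigma_2$ on the projections $P_i$, while $(\sigma_1\oplus\sigma_2)(S_i)(\tau_1\oplus\tau_2)(S_i^*)$ is an orthogonal direct sum supported on the mutually orthogonal projections $\sigma_1(P_i)\oplus0$ and $0\oplus\sigma_2(P_i)$. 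Lemma~\ref{lem:3}(i) then yields $d_i(\sigma_1\oplus\sigma_2,\tau_1\oplus\tau_2)=d_i(\sigma_1,\tau_1)+d_i(\sigma_2,\tau_2)$, so $d_s$ is additive.

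For surjectivity, given $[k_i]_{i=1}^N\in\Z^N$ I would fix one trivial extension $\tau$ whose projections $e_i=\tau(P_i)$ lift to infinite-rank projections, choose unitaries $u_i\in e_iQ(H)e_i$ with $\ind_{e_i}u_i=k_i$ (available since each $e_iQ(H)e_i$ is a copy of a Calkin algebra), and set $\sigma(S_i)=u_i\tau(S_i)$. A direct computation shows these elements satisfy the Cuntz--Krieger relations \eqref{eq:CK}, so by the universal property and the simplicity of $\OA$ they determine an extension $\sigma$ with $\sigma(P_i)=e_i$; moreover $d_i(\sigma,\tau)=\ind_{e_i}u_i\tau(S_i)\tau(S_i^*)=\ind_{e_i}u_i=k_i$, whence $d_s([\sigma]_s)=[[k_i]_i]$.

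The substantive part is injectivity. Since $d_s$ is a homomorphism it suffices to start from two extensions $\sigma_1,\sigma_2$ with $d_{\sigma_1}=d_{\sigma_2}$ and to produce a strong equivalence between them. First I would arrange common projections: each family $\{\sigma_j(P_i)\}_i$ is an orthogonal decomposition of $1$ in $Q(H)$ whose members lift to projections of infinite rank and corank, so a unitary $W_0\in B(H)$ conjugates $\{\sigma_1(P_i)\}_i$ onto $\{\sigma_2(P_i)\}_i$; replacing $\sigma_1$ by $\Ad(\pi(W_0))\circ\sigma_1$ (which leaves $d_{\sigma_1}$ unchanged by Lemma~\ref{lem:2.3}) I may assume $\sigma_1(P_i)=\sigma_2(P_i)=e_i$. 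Then $w_i:=\sigma_1(S_i)\sigma_2(S_i)^*$ is a unitary in $e_iQ(H)e_i$ with $\sigma_1(S_i)=w_i\sigma_2(S_i)$, and for a common trivial extension $\tau$ with $\tau(P_i)=e_i$ Lemma~\ref{lem:3}(ii) gives $\ind_{e_i}w_i=d_i(\sigma_1,\tau)-d_i(\sigma_2,\tau)$. The hypothesis $d_{\sigma_1}=d_{\sigma_2}$ says precisely that $[\ind_{e_i}w_i]_{i=1}^N\in\Im(I-A)_0$, i.e. equals $(I-A)[m_i]_{i=1}^N$ for some $[m_i]_i$ with $\sum_i m_i=0$.

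It then remains to manufacture a block-diagonal unitary $v=\sum_i v_i$ with $v_i\in e_iQ(H)e_i$ unitary, $\ind_{e_i}v_i=m_i$, solving the coupled system $w_i=v_i\,\sigma_2(S_i)\big(\sum_k A(i,k)v_k^*\big)\sigma_2(S_i)^*$ for all $i$; for then $\Ad(v)\circ\sigma_2=\sigma_1$ on generators, and since $\sum_i m_i=0$ the element $v$ has total index $\ind v=\sum_i m_i=0$ by Lemma~\ref{lem:3}(i), so it lifts to a unitary $W\in B(H)$ with $\sigma_1=\Ad(\pi(W))\circ\sigma_2$, i.e. $\sigma_1\underset{s}{\sim}\sigma_2$. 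Solving this system is the main obstacle. Writing $t_{ik}:=\sigma_2(S_i)e_k$, which satisfy $t_{ik}^*t_{ik}=A(i,k)e_k$ and $\sum_k t_{ik}t_{ik}^*=e_i$, the system becomes the fixed-point relation $v_i=w_i\sum_{k}t_{ik}v_kt_{ik}^*$, which recursively transports the prescribed corner unitaries along the admissible words of the underlying topological Markov shift. My plan is to solve it by the same telescoping construction over admissible words that underlies \cite[Theorem~5.3]{CK}: lifting each $w_i$ to a Fredholm operator of index $m_i$ on the Fock-type filtration of $e_iH$ and defining $v_i$ level by level, so that on a dense set of word vectors the definition is a finite product, with the global constraint $\sum_i m_i=0$ being exactly what permits the resulting block operator to be corrected by a compact perturbation into a genuine unitary. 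The delicate index bookkeeping here -- keeping the total index zero throughout, which is the feature separating $\Exts(\OA)$ from $\Extw(\OA)$ -- is where the real work lies.
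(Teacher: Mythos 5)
Your well-definedness and homomorphism arguments are fine, and your surjectivity argument is actually cleaner than the paper's: setting $\sigma(S_i)=u_i\tau(S_i)$ for unitaries $u_i\in e_iQ(H)e_i$ of prescribed index $k_i$ does satisfy the relations \eqref{eq:CK} and gives $d_i(\sigma,\tau)=k_i$ directly, whereas the paper builds one explicit generator with a rank-one defect and appeals to ``a similar fashion'' for the rest.

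The injectivity argument, however, has a genuine gap, and it is the heart of the proposition. You reduce to finding a block-diagonal unitary $v=\sum_i v_i$ of total index zero solving the coupled system $w_i=v_i\,\sigma_2(S_i)\bigl(\sum_k A(i,k)v_k^*\bigr)\sigma_2(S_i)^*$, and then you concede that solving it ``is the main obstacle'' and ``where the real work lies,'' offering only an unexecuted sketch of a telescoping construction over admissible words. That is precisely the step you cannot leave open: there is no reason an \emph{exact} intertwiner of this block-diagonal form should exist in $Q(H)$, and trying to produce one level by level on word vectors while controlling compact corrections is not obviously convergent. The paper avoids this entirely by exploiting that $d_s$ is a homomorphism in the standard way: it suffices to show the kernel is trivial, i.e.\ to take a single $\sigma$ with $d_\sigma=0$ and prove it is \emph{trivial} (liftable), not to conjugate two arbitrary extensions onto each other. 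Concretely, with $[d_i]=(I-A)[k_i]$ and $\sum_i k_i=0$, one builds $V=\sum_i V_i$ from isometries/coisometries $V_i\in B(E_iH)$ of index $-k_i$; the constraint $\sum_i k_i=0$ makes $\pi(V)$ an index-zero unitary of $Q(H)$, hence $\pi(V)=\pi(U)$ for a genuine unitary $U\in B(H)$. An index computation (Lemma~\ref{lem:3}) then shows each corner defect $\pi(U)\sigma(S_i)\pi(U^*)\tau(S_i^*)$ has Fredholm index zero in $e_iQ(H)e_i$, so it lifts to a unitary $W_i\in B(E_iH)$, and the operators $T_i=W_i\rho_\tau(S_i)$ satisfy \eqref{eq:CK} in $B(H)$ and exactly lift $\Ad(\pi(U))\circ\sigma$. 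This replaces your unsolved fixed-point system with a one-shot lifting argument; you correctly identified that the total-index-zero condition is what distinguishes $\Exts$ from $\Extw$, but the place it is used is in lifting $\pi(V)$ to a unitary of $B(H)$, not in any word-by-word bookkeeping.
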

\begin{proof}
It is obvious that $d_s: \Exts(\OA) \longrightarrow \Z^N/\Im(I-A)_0$
is a homomorphism of groups. 
It remains to show that $d_s$ is bijective.
We will first show that 
$d_s$ is injective.
Let $\sigma:\OA\longrightarrow Q(H)$ be an extension such that 
$d_s([\sigma]_s) =0$ in $\Z^N/\Im(I-A)_0.$ 
Take a trivial extension $\tau$ such that 
$\tau(P_i) = \sigma(P_i), i=1,\dots,N$.
Put $d_i = d_i(\sigma,\tau)\in \Z$.
Let $\rho_\tau:\OA\longrightarrow B(H)$ 
be a unital $*$-monomorphism such that 
$\tau = \pi\circ \rho_\tau$.  
By the assumption, there exists $[k_i]_{i=1}^N \in \Z^N$
such that 
\begin{equation*}
[d_i]_{i=1}^N = (I -A) [k_i]_{i=1}^N, \qquad \sum_{i=1}^N k_i =0.
\end{equation*}
Put $e_i = \tau(P_i)$ and $E_i = \rho_\tau(P_i)$ so that 
$\pi(E_i) = e_i$. 
 Take an isometry or coisometry $V_i \in B(E_iH)$ such that 
  $\ind(V_i) = - k_i$.
Put $V = \sum_{i=1}^N V_i \in B(H)$ and $v =\pi(V)$.
Since $v$ is a unitary in $Q(H)$ such that
$\ind(v) = \sum_{i=1}^N\ind_{E_i}(V_i) = - \sum_{i=1}^N k_i =0,$ 
one may take a unitary
$U $ in $B(H)$ such that $v = \pi(U)$.
By following the proof of \cite[Theorem 5.3]{CK},
we have
\begin{align*}
  & \ind_{e_i} \pi(U) \sigma(S_i) \pi(U^*) \tau(S_i^*) \\
%= &
%\ind_{e_i} \pi(V_i) \sigma(S_i) \pi(\sum_{n=1}^N V_n^*) \tau(S_i^*) \\
= &
\ind_{e_i} \pi(V_i) \sigma(S_i)\sigma(S_i^* S_i) \pi(\sum_{n=1}^N V_n^*) \tau(S_i^*) \\
= &
\ind_{e_i} \pi(V_i) \sigma(S_i) \left(\sum_{j=1}^N A(i,j) \pi(E_j)\right)
\pi(\sum_{n=1}^N E_n V_n^*) \tau(S_i^*) \\
%= &
%\ind_{e_i} \pi(V_i) \sigma(S_i)\sum_{j=1}^N A(i,j) \pi(E_j V_j^*) \tau(S_i^*) \\
%= &
%\ind_{e_i} \pi(V_i) \sigma(S_i)\pi( \sum_{j=1}^N A(i,j) V_j^*) \tau(S_i^*) \\
= &
\ind_{e_i} \pi(V_i) \sigma(S_i)\sigma(S_i^* S_i)\pi( \sum_{j=1}^N A(i,j) V_j^*) \tau(S_i^*) \\
= &
\ind_{e_i} \pi(V_i) \sigma(S_i)\tau(S_i^*) 
\left( \tau( S_i)\pi( \sum_{j=1}^N A(i,j) V_j^*) \tau(S_i^*)\right) \\
= &
\ind_{e_i} \pi(V_i)  + \ind_{e_i}\sigma(S_i)\tau(S_i^*) 
+ \ind_{e_i} \tau( S_i)\pi( \sum_{j=1}^N A(i,j) V_j^*) \tau(S_i^*) \\
= &
 -k_i + d_i +  \sum_{j=1}^N A(i,j) \ind_{e_i} \tau( S_i)\pi( V_j^*) \tau(S_i^*). 
\end{align*}
Since
$\ind_{e_i} \tau( S_i)\pi( V_j^*) \tau(S_i^*) = \ind_{e_j} \pi( V_j^*) = k_j$
whenever $A(i,j) =1$,
we have
\begin{equation*}
   \ind_{e_i} \pi(U) \sigma(S_i) \pi(U^*) \tau(S_i^*) 
=  -k_i + d_i +  \sum_{j=1}^N A(i,j)k_j = 0
\end{equation*}
so that there exists  a unitary $W_i \in B(E_iH)$ on $E_iH$ such that 
\begin{equation*}
   \pi(U) \sigma(S_i) \pi(U^*) \tau(S_i^*) 
=  \pi(W_i), \qquad i=1,\dots, N.
\end{equation*}
By putting $T_i = W_i \rho_\tau(S_i), i=1,\dots,N$, we have
\begin{equation*}
\sum_{j=1}^N T_j T_j^* 
= \sum_{j=1}^N  W_j \rho_\tau(S_j)\rho_\tau(S_j^*)W_j^*  
= \sum_{j=1}^N  W_j W_j^* = \sum_{j=1}^N  E_j = 1,
\end{equation*}
 and
 \begin{equation*}
T_i^* T_i 
=  \rho_\tau(S_i^*)W_i^* W_i \rho_\tau(S_i) 
 =  \rho_\tau(S_i^*)\rho_\tau(S_i S_i^*) \rho_\tau(S_i) 
 =  \sum_{j=1}^N A(i,j) \rho_\tau(S_j S_j^*).
 \end{equation*}
 As  $\rho_\tau(S_j S_j^*)= T_j T_j^*,$
 we see that 
 $ T_i^* T_i 
  =  \sum_{j=1}^N A(i,j)  T_j T_j^*.
  $
 Define $\rho_\sigma(S_i) = T_i \in B(H), i=1,\dots,N$ 
 so that 
 $\rho_\sigma:\OA\longrightarrow Q(H)$ is a unital $*$-monomorphism
 such that 
 \begin{align*}
 \pi\circ \rho_\sigma(S_i)
 = & \pi(W_i \rho_\tau(S_i)) \\
  = &  \pi(U) \sigma(S_i) \pi(U^*) \tau(S_i)\tau(S_i^*)  \\
%  = &  \pi(U) \sigma(S_i) \pi(U^*) \tau(S_i S_i^*)  \\
 = &  \pi(U) \sigma(S_i) \tau(S_i S_i^*)\pi(U^*)   \\
 = &  \pi(U) \sigma(S_i) \pi(U^*).   
  \end{align*}
Hence we have
$\Ad(\pi(U))\circ \sigma = \pi\circ \rho_\sigma.$
This shows that $\sigma$ is strongly equivalent to 
the trivial extension
$\pi\circ \rho_\sigma$
proving $[\sigma]_s =0$ in $\Exts(\OA)$.

We will next show that $d_s$ is surjective.
We will show that there exist an extension
$\sigma:\OA\longrightarrow Q(H)$ 
and a trivial extension
$\tau:\OA\longrightarrow Q(H)$ 
such that $\tau(P_i) = \sigma(P_i)$ denoted by $e_i$
and
\begin{equation}\label{eq:2.2.4}
\ind_{e_i}\sigma(S_i)\tau(S_i^*) =
\begin{cases}
-1 & \text{ if } i=1,\\
0 & \text{ otherwise. }
\end{cases}
\end{equation}
Decompose the Hilbert space $H$
as $H =H_1\oplus\cdots\oplus H_N$ such that 
$\dim H_i = \dim H,i=1,\dots,N$. 
Take a nonzero vector $v_1 \in H_1$ 
and put its orthogonal complement 
$H_1^0= \{\mathbb{C} v_1\}^\perp \cap H_1$ in $H_1$.
Let $E_i$ be the orthogonal projection onto $H_i, i=1,\dots,N$.
The orthogonal projection onto $H_1^0$ is denoted by $E_1^0$,
so that 
$\sum_{i=1}^N E_i =1$ and $E_1 - E_1^0$ is the projection onto 
$\mathbb{C} v_1$.
Take partial isometries $T_1,\dots,T_N$ and $V_1,\dots,V_N$ on $H$
such that 
\begin{gather*}
T_1 T_1^* = E_1^0, \quad T_i T_i^* = E_i,\,\,  i=2,\dots,N,
\qquad
V_i V_i^* = E_i,\,\,  i=1,\dots,N \quad \text{ and } \\
T_i^* T_i = V_i^* V_i =\sum_{j=1}^N A(i,j) E_j, \quad i=1,\dots,N.
\end{gather*}
We know that 
\begin{gather*}
\pi(T_i) \pi(T_i)^* = \pi(V_i)\pi( V_i)^* =\pi(E_i), \qquad
\sum_{i=1}^N\pi(E_i) =1\quad \text{ and } \\ 
\pi(T_i)^* \pi(T_i) = \pi(V_i)^* \pi(V_i) = \sum_{j=1}^N A(i,j) \pi(E_j),\quad
i=1,\dots,N. 
\end{gather*}
By setting
%$\sigma, \tau:\OA\longrightarrow Q(H)$ and
$\sigma(S_i) = \pi(T_i), \tau(S_i) = \pi(V_i), i=1,\dots,N,$
we have extensions 
$\sigma, \tau:\OA\longrightarrow Q(H)$ such that 
$\tau$ is a trivial extension. 
Put $e_i = \pi(E_i), i=1,\dots,N.$
Since
$\sigma(S_i)\tau(S_i^*) = \pi(T_i V_i^*), i=1,\dots,N,$
we have
$\ind_{e_i}\sigma(S_i)\tau(S_i^*) = \ind_{E_i}T_iV_i^* 
$
so that 
the equality
\eqref{eq:2.2.4}
holds.
Therefore we have
$d_s([\sigma]_s) = [(-1,0,\dots,0)] 
$
in $\Z^N/ \Im(I-A)_0$.
One may show that  
$d_s:\Exts(\OA)\longrightarrow\Z^N/ \Im(I-A)_0$
is surjective by a similar fashion.
\end{proof}
Recall that the $N\times N$ matrix $R_n$ for $n=1,\dots,N$ is defined in \eqref{eq:Rn}.
\begin{lemma}\label{lem:2.5}
For $n=1,2,\dots,N$,
put $\widehat{A}_n = A + R_n - A R_n.$
Then we have
\begin{equation}
\Im(I- A)_0 = (I - \widehat{A}_n)\Z^N.
\end{equation}
In particular  for $n=1$, 
we put $\widehat{A} = \widehat{A}_1$ 
so that we have
$\Im(I- A)_0 = (I - \widehat{A})\Z^N.$
 \end{lemma}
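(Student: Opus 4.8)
The plan is to reduce the asserted set equality to a single elementary matrix identity relating $I-\widehat{A}_n$ to $I-A$, and then read off both inclusions directly from it. Write $e_n \in \Z^N$ for the $n$th standard basis vector and, for $m = [m_i]_{i=1}^N \in \Z^N$, abbreviate $|m| = \sum_{j=1}^N m_j$.

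First I would record how $R_n$ and $AR_n$ act on a column vector. Since the $n$th row of $R_n$ is $[1,\dots,1]$ and every other row vanishes (so $R_n(k,j) = \delta_{k,n}$), one has $R_n m = |m|\, e_n$; applying $A$ then gives $A R_n m = |m|\, A e_n$, that is, $|m|$ times the $n$th column of $A$. The crux of the argument is the resulting identity: for every $m \in \Z^N$,
\begin{align*}
(I - \widehat{A}_n) m
&= (I - A) m - R_n m + A R_n m \\
&= (I - A) m - |m|\,(e_n - A e_n) \\
&= (I - A)\bigl(m - |m|\, e_n\bigr).
\end{align*}
Thus applying $I-\widehat{A}_n$ to $m$ is the same as applying $I-A$ to the corrected vector $k := m - |m|\, e_n$, whose coordinate sum is $|m| - |m| = 0$.

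From this identity both inclusions are immediate. For $(I-\widehat{A}_n)\Z^N \subseteq \Im(I-A)_0$: given $m \in \Z^N$, the vector $k = m - |m|\, e_n$ lies in the hyperplane $\{\, \sum_i k_i = 0 \,\}$, so $(I-\widehat{A}_n) m = (I-A)k \in \Im(I-A)_0$. Conversely, for $\Im(I-A)_0 \subseteq (I-\widehat{A}_n)\Z^N$: any generator $(I-A)k$ with $\sum_i k_i = 0$ satisfies $|k| = 0$, whence the identity specializes to $(I-\widehat{A}_n) k = (I-A)(k - 0) = (I-A)k$, exhibiting $(I-A)k$ as an element of $(I-\widehat{A}_n)\Z^N$. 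The special case $n=1$, giving $\widehat{A} = \widehat{A}_1$, is then just a restatement.

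Since the whole argument rests on the single column-vector identity above, I do not expect a genuine obstacle here; the only point needing a moment of care is verifying $A R_n m = |m|\, A e_n$ — equivalently, that every column of $AR_n$ coincides with the $n$th column of $A$ — which is exactly what $R_n(k,j) = \delta_{k,n}$ delivers. Everything else is bookkeeping on the coordinate-sum condition $\sum_i k_i = 0$.
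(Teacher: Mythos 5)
Your proof is correct and is essentially the paper's own argument: your column-vector identity $(I-\widehat{A}_n)m=(I-A)(m-|m|e_n)$ is exactly the matrix factorization $I-\widehat{A}_n=(I-A)(I-R_n)$ used in the paper, combined with the observation that $I-R_n$ fixes precisely the vectors of coordinate sum zero and maps $\Z^N$ onto that set. No gaps; both inclusions are handled correctly.
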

\begin{proof}
As 
$\Im(I-A)_0 = \{ (I -A) [k_i]_{i=1}^N \mid \sum_{i=1}^N k _i =0 \},$
a vector $[k_i]_{i=1}^N  \in \Z^N$ 
satisfies $\sum_{i=1}^N k _i =0$
if and only if $[k_i]_{i=1}^N  = ( I - R_n) [k_i]_{i=1}^N $.
Hence we have
$$
\Im(I -A)_0 = (I -A) ( I - R_n) \Z^N.
 $$
Since 
$(I -A ( I - R_n) = I - \widehat{A}_n$, 
we have 
$\Im(I- A)_0 = \Im(I - \widehat{A}_n)\Z^N$.
\end{proof}
Therefore we reach the following theorem.
\begin{theorem}\label{thm:2.6}
$\Exts(\OA) \cong \Z^N/( I - \widehat{A})\Z^N,$
where $\widehat{A} = A + R_1 - AR_1.$
\end{theorem}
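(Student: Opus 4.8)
The plan is to assemble the theorem directly from the two preceding results, Proposition~\ref{prop:2.4} and Lemma~\ref{lem:2.5}, which between them already carry all the substantive content; what remains is only to chain them together.

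First I would invoke Proposition~\ref{prop:2.4}, which furnishes the isomorphism of groups $d_s:\Exts(\OA)\longrightarrow \Z^N/\Im(I-A)_0$. This is the analytic heart of the argument: it encodes an extension $\sigma$ by the index vector $[d_i(\sigma,\tau)]_{i=1}^N$, shows (via Lemma~\ref{lem:2.2}) that this is well defined modulo $\Im(I-A)_0$, and then establishes bijectivity. The injectivity step is the genuinely delicate part, since one must use Voiculescu's theorem (Lemma~\ref{lem:1}) to lift the vanishing index data to honest partial isometries $T_i$ satisfying the Cuntz--Krieger relations \eqref{eq:CK}, thereby realizing $\sigma$ as strongly equivalent to a trivial extension; surjectivity is handled by the explicit construction of extensions with prescribed index vectors.

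Second I would apply Lemma~\ref{lem:2.5} in the case $n=1$. The point there is that a vector $[k_i]_{i=1}^N$ satisfies $\sum_{i=1}^N k_i=0$ precisely when $[k_i]_{i=1}^N=(I-R_1)[k_i]_{i=1}^N$, which rewrites the subgroup as $\Im(I-A)_0=(I-A)(I-R_1)\Z^N$; the matrix identity $(I-A)(I-R_1)=I-(A+R_1-AR_1)=I-\widehat{A}$ then yields the equality of subgroups $\Im(I-A)_0=(I-\widehat{A})\Z^N$ inside $\Z^N$.

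Finally, since these two subgroups are literally equal (not merely abstractly isomorphic), the quotient groups $\Z^N/\Im(I-A)_0$ and $\Z^N/(I-\widehat{A})\Z^N$ coincide as abelian groups, so composing with $d_s$ gives $\Exts(\OA)\cong \Z^N/\Im(I-A)_0=\Z^N/(I-\widehat{A})\Z^N$, which is the assertion. I do not expect any obstacle in this concluding assembly: all the difficulty has been front-loaded into the bijectivity of $d_s$ in Proposition~\ref{prop:2.4} and into the subgroup computation of Lemma~\ref{lem:2.5}, and the theorem is their immediate consequence.
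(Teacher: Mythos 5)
Your proposal is correct and matches the paper exactly: Theorem~\ref{thm:2.6} is stated there with no separate proof beyond the phrase ``Therefore we reach the following theorem,'' i.e.\ it is obtained precisely by composing the isomorphism $d_s$ of Proposition~\ref{prop:2.4} with the subgroup equality $\Im(I-A)_0=(I-\widehat{A})\Z^N$ of Lemma~\ref{lem:2.5}. Your additional remarks on where the real work lies (the bijectivity of $d_s$ and the identity $(I-A)(I-R_1)=I-\widehat{A}$) are accurate.
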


%%%%%%%%%%%%%%%%%%%%%%%%%%%%%%%%%%%%%%%%%%%%%%%
%%%%%%%%%%%%%%%%%%%%%%%%%%%%%%%%%%%%%%%%%%%%%%%%
\section{The homomorphism $\iota_A: \Z \longrightarrow \Exts(\OA)$}
%%%%%%%%%%%%%%%%%%%%%%%%%%%%%%%%%%%%%%%%%%%%%%%%%%%
For $m \in \Z$, take $k_1,\dots, k_N \in \Z$ such that 
$ m = \sum_{j=1}^N k_j$.
Take trivial extensions $ \tau, \tau' : \OA\longrightarrow Q(H)$ such that 
$\tau(P_i)= \tau'(P_i)$ denoted by $e_i, i=1,\dots,N$.
Let $\rho_\tau, \rho_{\tau'} :\OA\longrightarrow B(H)$ 
be unital $*$-monomorphisms such that 
$\tau = \pi\circ\rho_\tau, 
\tau' = \pi\circ\rho_{\tau'},
$
respectivley.
Put $ E_i = \rho_\tau(P_i)$ so that 
$\pi(E_i) = e_i$.
Take an isometry or coisometry $V_i \in B(E_iH)$ 
such that 
$\ind_{E_i} V_i = k_i$ and put 
$ V = \sum_{i=1}^N V_i \in B(H)$.
Hence we see that 
\begin{equation*}
\ind_{e_i} \pi(V) = k_i, \qquad i=1,\dots,N.
\end{equation*}
Recall that the  extension $\sigma_m: \OA\longrightarrow Q(H)$ 
is defined by setting
$\sigma_m = \Ad(\pi(V)) \circ \tau :\OA \longrightarrow Q(H)$.
Put
$d_i = d_i(\sigma_m, \tau') = \ind_{e_i}\sigma_m(S_i)\tau'(S_i^*)$.
Then 
$d_s([\sigma_m]_s) = [(d_1,\dots,d_N)] \in \Z^N/ (I-\widehat{A})\Z^N$
does not depend on the choice of trivial extensions $\tau, \tau'$,
becuuse of Lemma \ref{lem:2.2} and Lemma \ref{lem:2.3}.
\begin{proposition}\label{prop:3.1}
Define $\hat{\iota}_A: \Z \longrightarrow  \Z^N/ (I-\widehat{A})\Z^N$
by setting
$\hat{\iota}_A(m) = [(I-A)[k_i]_{i=1}^N ]$ for 
$m = \sum_{i=1}^Nk_i.$
Then we have
\begin{enumerate}
\renewcommand{\theenumi}{(\roman{enumi})}
\renewcommand{\labelenumi}{\textup{\theenumi}}
\item
$\hat{\iota}_A(m) = [(I-A)[k_i]_{i=1}^N]$ does not depend 
on the choice of $[k_i]_{i=1}^N$ as long as $m = \sum_{i=1}^Nk_i.$
\item The diagram
\begin{equation*}
\begin{CD}
\Z @>\iota_A>> \Exts(\OA)\\
@V{=}VV   @VV{d_s}V\\
\Z @>>\hat{\iota}_A> \Z^N/ (I-\widehat{A})\Z^N
\end{CD}
\end{equation*}
is commutative, that is 
$d_s(\iota_A(m)) = \hat{\iota}_A(m),$
where $\iota_A(m) = [\sigma_m]_s.$
\item
The position $\hat{\iota}_A(1)$
 in $\Z^N/ (I-\widehat{A})\Z^N$ 
 is invariant under the isomorphism
 class of $\OA$.
\item If $\det(I- A) \ne 0$,
then we have a short exact sequence 
\begin{equation}
\begin{CD}
0 @>>> \Z @>\iota_A>> \Exts(\OA)
@>q_A >> \Extw(\OA) @>>> 0.
\end{CD}
\end{equation}
\end{enumerate}
\end{proposition}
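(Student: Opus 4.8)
The plan is to treat the four parts in order, leaning throughout on the isomorphism $d_s:\Exts(\OA)\to\Z^N/(I-\widehat A)\Z^N$ of Proposition~\ref{prop:2.4} and Theorem~\ref{thm:2.6}, and on the identification $\Im(I-A)_0=(I-\widehat A)\Z^N$ from Lemma~\ref{lem:2.5}. For (i), suppose $[k_i]_{i=1}^N$ and $[k_i']_{i=1}^N$ both have sum $m$. Their difference has zero sum, so by the very definition of $\Im(I-A)_0$ we get $(I-A)([k_i]-[k_i'])\in\Im(I-A)_0=(I-\widehat A)\Z^N$; hence $(I-A)[k_i]$ and $(I-A)[k_i']$ represent the same class in $\Z^N/(I-\widehat A)\Z^N$, and $\hat{\iota}_A(m)$ is well defined. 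The same observation shows $\hat{\iota}_A$ is a homomorphism.

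For (ii), I would compute $d_i(\sigma_m,\tau)$ directly, using the trivial extension $\tau$ out of which $\sigma_m=\Ad(\pi(V))\circ\tau$ is built; by Lemmas~\ref{lem:2.2} and~\ref{lem:2.3} the resulting class does not depend on this choice. Writing $\sigma_m(S_i)=\pi(V)\tau(S_i)\pi(V^*)$ and inserting $\tau(S_i^*S_i)=\sum_jA(i,j)e_j$ exactly as in the injectivity argument of Proposition~\ref{prop:2.4}, the multiplicativity and additivity of $\ind$ furnished by Lemma~\ref{lem:3} split the index into three terms. With the present sign convention $\ind_{e_i}\pi(V_i)=k_i$, these evaluate to $k_i$, to the vanishing contribution $d_i(\tau,\tau)=0$, and to $\sum_jA(i,j)\ind_{e_j}\pi(V_j^*)=-\sum_jA(i,j)k_j$, so that $d_i(\sigma_m,\tau)=k_i-\sum_jA(i,j)k_j=((I-A)[k_j])_i$. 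Therefore $d_s([\sigma_m]_s)=[(I-A)[k_i]]=\hat{\iota}_A(m)$, which is precisely (ii).

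For (iii) and (iv) the arguments are structural. The map $\iota_{\A}$ of Lemma~\ref{lem:2} is defined purely from a trivial extension and a unitary of prescribed Fredholm index, hence is natural with respect to $*$-isomorphisms of $\A$; in particular an isomorphism $\OA\cong\OB$ carries $\iota_A(1)$ to $\iota_B(1)$ under the induced isomorphism $\Exts(\OA)\cong\Exts(\OB)$. Transporting along the isomorphisms $d_s$ and invoking (ii), the distinguished element $\hat{\iota}_A(1)$ is thus an invariant of the isomorphism class of $\OA$, giving (iii). For (iv), since $d_s$ is an isomorphism and the square in (ii) commutes, $\iota_A$ is injective if and only if $\hat{\iota}_A$ is. If $\det(I-A)\ne0$, then $I-A$ is injective on $\Z^N$, so $\hat{\iota}_A(m)=0$ forces $(I-A)[k_i]=(I-A)[k_i']$ with $\sum_ik_i'=0$, whence $[k_i]=[k_i']$ and $m=\sum_ik_i=0$; thus $\iota_A$ is injective. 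Combining this with exactness at the middle, $\iota_A(\Z)=\Ker q_A$ from Lemma~\ref{lem:2}, and the canonical surjectivity of $q_A$ yields the short exact sequence of (iv).

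The main obstacle I anticipate is the sign bookkeeping in (ii): the computation in Proposition~\ref{prop:2.4} was carried out with $\ind(V_i)=-k_i$, whereas here $\ind_{e_i}\pi(V_i)=+k_i$, and one must verify that the accumulated signs produce exactly $+(I-A)[k_i]$, not its negative, so that the diagram commutes with $\hat{\iota}_A$ as stated. The naturality claim underlying (iii) must also be formulated with care, as it is precisely what promotes $\hat{\iota}_A(1)$ to a genuine isomorphism invariant.
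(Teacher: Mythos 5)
Your proposal is correct and follows essentially the same route as the paper: part (i) via Lemma \ref{lem:2.5} applied to the zero-sum difference, part (ii) by the same index computation with $\ind_{e_i}\pi(V_i)=k_i$ yielding $d_i=k_i-\sum_jA(i,j)k_j$, part (iii) by naturality of $\iota_\A$ transported through $d_s$, and part (iv) by injectivity of $I-A$ on $\Z^N$ combined with Lemma \ref{lem:2}. The sign bookkeeping you flag is handled exactly as you describe, so no gap remains.
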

\begin{proof}
(i)
Suppose that 
$m = \sum_{i=1}^N k_i = \sum_{i=1}^N k'_i$
 for some $k_i, k'_i \in \Z$.
Put $ l_i = k_i - k'_i$ so that 
$\sum_{i=1}^Nl_i =0$ and 
$(I-A)[k_i]_{i=1}^N - (I-A)[k'_i]_{i=1}^N  =(I-A)[l_i]_{i=1}^N \in (I -\widehat{A})\Z^N$
by Lemma \ref{lem:2.5}.  
This shows that 
$[(I-A)[k_i]_{i=1}^N] =[(I-A)[k'_i]_{i=1}^N]$ in $\Z^N/ (I -\widehat{A})\Z^N.$

(ii) Keep the notation stated before Proposition \ref{prop:3.1}.
Since 
$
d_i 
= \ind_{e_i} \sigma_m(S_i)\tau'(S_i^*)
$
does not depend on the choice of a trivial extension 
$\tau':\OA\longrightarrow Q(H)$ as long as $\tau(P_i) = \tau'(P_i)$,
we may take $\tau' $ as $\tau$.
%As $\ind_{e_i}\tau(S_i)\tau(S_i^*)=\ind_{e_i}\tau(S_i S_i^*)=0,$
%we have
We then have
\begin{align*}
d_i 
%=& d_i(\sigma_m,\tau') \\
=& \ind_{e_i} \sigma_m(S_i)\tau(S_i^*) \\
=& \ind_{e_i} \pi(V)\tau(S_i)\pi(V^*) \tau(S_i^*) \\
=& \ind_{e_i} \pi(V)
+ \ind_{e_i} \tau(S_i)\pi(V^*) \tau(S_i^*) \\
=& k_i
+ \ind_{\tau(S_i^*P_i S_i)} \pi(V^*)  \\
=& k_i
+ \sum_{j=1}^N A(i,j) \ind_{\tau(P_j)} \pi(V^*)  \\
%=& k_i
%- \sum_{j=1}^N A(i,j) \ind_{e_j} \pi(V)  \\
=& k_i -\sum_{j=1}^NA(i,j)k_j
\end{align*}
so that 
we obtain
$$
d_s(\iota_A(m)) = d_s([\sigma_m]_s) =[d_i]_{i=1}^N 
=[(I-A)[k_i]_{i=1}^N] = \hat{\iota}_A(m).
$$ 

(iii)
By the construction, the map
$\iota_\A: m \in \Z \longrightarrow [\sigma_m]_s \in \Exts(\A)$
as well as
the position 
$\iota_\A(1)$ in $\Exts(\A)$ 
is invariant under the isomorhism class of a $C^*$-algebra $\A$.
For $\A = \OA$, the assertion (ii) says that 
$$
(\Exts(\OA), \iota_A(1)) \cong (\Z^N/(I-\widehat{A})\Z^N, \hat{\iota}_A(1))
$$
so that the position of $\hat{\iota}_A(1)$ in the group
$\Z^N/(I-\widehat{A})\Z^N$ 
is invariant under the isomorphism class of $\OA$.
%As 
%$$
%\hat{\iota}_A(1) 
%= [(I-A)
%\begin{bmatrix}
%1\\
%0 \\
%\vdots\\
%0\\
%\end{bmatrix}
%]
%\quad \text{ in }
%\Z^N/(I-\widehat{A})\Z^N,
%$$
%we have the desired assertion.

(iv)
Assume that $\det(I -A) \ne 0$.
Let $m \in \Z$ satisfy $\iota_A(m) =0.$
Take $k_1,\dots,k_N\in \Z$ such that 
$m = \sum_{i=1}^Nk_i$ and hence
$\hat{\iota}_A(m) = [(I-A)[k_i]_{i=1}^N].$
As
$\hat{\iota}_A(m) = d_s(\iota_A(m)) = 0,$ 
there exists $[n_i]_{i=1}^N\in \Z^N$ such that 
$\sum_{i=1}^N n_i =0$ and
$\hat{\iota}_A(m) = (I-A)[n_i]_{i=1}^N$.
We then have
$(I-A)[k_i]_{i=1}^N= (I-A)[n_i]_{i=1}^N.$
%Hence
%$[n_i-k_i]_{i=1}^N \in \Ker(I-A)$.
By the assumption $\det(I -A)\ne 0$,
we have
 $[n_i]_{i=1}^N= [k_i]_{i=1}^N$
 so that 
 $m = \sum_{i=1}^N n_i =0.$
\end{proof}
Since 
$I-\widehat{A} = (I -A)(I-R_1)$,
the inclusion relation
$(I-\widehat{A})\Z^N \subset  (I -A)\Z^N$
holds.
There exists a natural quotient map
$\hat{q}_A: \Z^N/(I-\widehat{A})\Z^N
\longrightarrow \Z^N/(I-A)\Z^N$.
In \cite{CK}, Cuntz--Krieger proved that 
the map
$d_w: \Extw(\OA)\longrightarrow \Z^N/(I-A)\Z^N$
defined by
$d_w([\sigma]_w) =[(d_1,\dots,d_N)] \in \Z^N/(I-A)\Z^N$
yields an isomorphism of groups.

Let us denote by
$\Ker(I -A), \Ker(I - \widehat{A})$
the subgroups of $\Z^N$ defined by
the kernels in $\Z^N$ of the matrices $I-A$ and of $I - \widehat{A},$
respectively.
Define homomorphisms of groups
$$
i_1: \Z\longrightarrow \Ker(I - \widehat{A}),\qquad
j_A: \Ker(I - \widehat{A})  \longrightarrow \Ker(I - A),\qquad
s_A: \Ker(I - A)\longrightarrow \Z
$$
by setting
$$
i_1: n \longrightarrow
\begin{bmatrix}
n\\
0\\
\vdots\\
0
\end{bmatrix},
\qquad
j_A:
[l_i]_{i=1}^N \longrightarrow
\begin{bmatrix}
- \sum_{i=2}^N l_i\\
l_2\\
\vdots\\
l_N
\end{bmatrix},
\qquad
s_A:
[l_i]_{i=1}^N \longrightarrow
\sum_{i=1}^N l_i.
$$
\begin{lemma}
We have the following long exact sequence.
\begin{equation*}
\begin{CD}
0 @>>> \Z @>{i_1}>> \Ker(I-\widehat{A}) @>j_A>> \Ker(I-A) \\
@. @. @.   @VV{s_A}V  \\
0  @<<<    \Z^N/ (I-A)\Z^N  @<\hat{q}_A<< \Z^N/ (I-\widehat{A})\Z^N @<\hat{\iota}_A<< \Z 
\end{CD}
\end{equation*}
\end{lemma}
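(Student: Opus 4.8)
The plan is to reduce every exactness assertion to elementary linear algebra over $\Z$ by exploiting the factorization $I-\widehat{A}=(I-A)(I-R_1)$ recorded just above. First I would isolate the two structural facts about $I-R_1$ that drive everything: since $R_1v=(\sum_i v_i,0,\dots,0)^t$, its kernel is $\Ker(I-R_1)=\Z e_1$ (the first coordinate axis) and its image is the sum-zero sublattice $\Im(I-R_1)=\{w\in\Z^N\mid\sum_i w_i=0\}$. With these in hand I would observe that the three maps in the top row and the corner are restrictions of familiar operations: $i_1$ is the inclusion $\Z\hookrightarrow\Z e_1$, the map $j_A$ is left multiplication by $I-R_1$ restricted to $\Ker(I-\widehat{A})$ (comparing formulas gives $j_A([l_i])=(I-R_1)[l_i]$), and $s_A$ is the coordinate sum. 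These are well defined with the stated targets: $(I-\widehat{A})e_1=(I-A)(I-R_1)e_1=0$ puts $\Z e_1$ inside $\Ker(I-\widehat{A})$, and if $(I-\widehat{A})v=0$ then $(I-A)((I-R_1)v)=0$, so $j_A(v)\in\Ker(I-A)$.

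Next I would verify exactness along the top row. Injectivity of $i_1$ is clear. For exactness at $\Ker(I-\widehat{A})$, a vector $v$ lies in $\Ker(j_A)$ precisely when $(I-R_1)v=0$, i.e. $v\in\Z e_1$; since $\Z e_1\subset\Ker(I-\widehat{A})$ this is exactly $\Im(i_1)$. For exactness at $\Ker(I-A)$ I must show $\Im(j_A)=\Ker(s_A)=\{l\in\Ker(I-A)\mid\sum_i l_i=0\}$. The inclusion $\subseteq$ is immediate from $\Im(I-R_1)\subset\{w\mid\sum_i w_i=0\}$. The reverse inclusion is the one point needing a small idea: given $l\in\Ker(I-A)$ with $\sum_i l_i=0$, the sum-zero condition forces $R_1l=0$, hence $(I-R_1)l=l$ and $(I-\widehat{A})l=(I-A)l=0$; thus $l\in\Ker(I-\widehat{A})$ and $j_A(l)=l$, so $l\in\Im(j_A)$.

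For the bottom row I would use that $\hat{\iota}_A$ is well defined independently of the representative $k$ with $m=\sum_i k_i$ (Proposition \ref{prop:3.1}(i)), which keeps the computations clean. Exactness at the lower $\Z$ means $\Ker(\hat{\iota}_A)=\Im(s_A)$: if $m=\sum_i l_i$ with $l\in\Ker(I-A)$, then choosing $k=l$ gives $\hat{\iota}_A(m)=[(I-A)l]=0$; conversely, if $\hat{\iota}_A(m)=0$ I pick any $k$ with $\sum_i k_i=m$, write $(I-A)k=(I-A)(I-R_1)u$, and set $l:=k-(I-R_1)u$, so that $l\in\Ker(I-A)$ with $\sum_i l_i=m$, whence $m\in\Im(s_A)$. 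Exactness at $\Z^N/(I-\widehat{A})\Z^N$ means $\Im(\hat{\iota}_A)=\Ker(\hat{q}_A)$: the image of $\hat{\iota}_A$ is $\{[(I-A)k]\mid k\in\Z^N\}$, which is precisely the image of the sublattice $(I-A)\Z^N$ in the quotient $\Z^N/(I-\widehat{A})\Z^N$, and this is the kernel of the natural surjection $\hat{q}_A$. Finally $\hat{q}_A$ is induced by the identity on $\Z^N$, hence surjective, giving exactness at $\Z^N/(I-A)\Z^N$.

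The computation presents no serious obstacle once the factorization $I-\widehat{A}=(I-A)(I-R_1)$ and the kernel/image description of $I-R_1$ are extracted. The two places requiring the most care are the reverse inclusion $\Ker(s_A)\subseteq\Im(j_A)$, where the sum-zero hypothesis must be used to land back inside $\Ker(I-\widehat{A})$, and the identification of $\Im(\hat{\iota}_A)$ with $\Ker(\hat{q}_A)$, where one must track representatives through the two quotients and invoke the representative-independence of $\hat{\iota}_A$.
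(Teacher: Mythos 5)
Your proposal is correct and follows essentially the same route as the paper: the one nontrivial verification, exactness at the lower-right $\Z$ (i.e. $\Ker(\hat{\iota}_A)=s_A(\Ker(I-A))$), is argued exactly as in the paper by subtracting a sum-zero preimage, and the remaining exactness checks, which the paper dismisses as easily seen, you carry out correctly via the factorization $I-\widehat{A}=(I-A)(I-R_1)$ and the identification of $j_A$ with multiplication by $I-R_1$.
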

\begin{proof}
It suffices to show the exactness at the lower right corner
\begin{equation}\label{eq:exactatlr}
\begin{CD}
\Ker(I-A) @>s_A>> \Z
@>\hat{\iota}_A >> \Z^N/ (I-\widehat{A})\Z^N.
\end{CD}
\end{equation}
Suppose that  
$m \in \Z$ satisfies $\hat{\iota}_A(m) =0.$
Take $k_1,\dots,k_N\in \Z$ such that 
$m = \sum_{i=1}^Nk_i$ and hence
$(I-A)[k_i]_{i=1}^N$
belongs to $\Im(I-A)_0.$
There exists $[n_i]_{i=1}^N\in\Z^N$ such that 
$(I-A)[k_i]_{i=1}^N= (I-A)[n_i]_{i=1}^N$
and
$\sum_{i=1}^N n_i =0.$
Put $l_i = k_i - n_i$.
Hence 
$[l_i]_{i=1}^N \in \Ker(I-A)$
and
$ \sum_{i=1}^N l_i = \sum_{i=1}^N k_i = m$
so that 
$s_A([l_i]_{i=1}^N )= m,$
proving  
$\Ker(\hat{\iota}_A) \subset s_A(\Ker(I-A))$.

Conversely, for $[l_i]_{i=1}^N\in \Ker(I-A)$, we have
$
\hat{\iota}_A(s_A([l_i]_{i=1}^N)) 
=\hat{\iota}_A(\sum_{i=1}^N l_i) 
=[(I-A)[l_i]_{i=1}^N] =0,
$
so that 
$s_A(\Ker(I-A)) \subset \Ker(\hat{\iota}_A).$
Hence the sequence  \eqref{eq:exactatlr}
is exact at the middle.
Exactness at the other places are easily seen.
\end{proof}

We thus have the following theorem.
\begin{theorem}\label{thm:3.3}
\begin{enumerate}
\renewcommand{\theenumi}{(\roman{enumi})}
\renewcommand{\labelenumi}{\textup{\theenumi}}
\item
The isomorphisms
$$
d_w : \Extw(\OA)\longrightarrow \Z^N/(I-A)\Z^N,\qquad
d_s: \Exts(\OA)\longrightarrow \Z^N/(I-\widehat{A})\Z^N
$$
of groups and a homomorphism
$\hat{\iota}_A: \Z \longrightarrow \Z^N/(I-\widehat{A})\Z^N
$
defined by 
$\hat{\iota}_A(m) = (I-A)[k_i]_{i=1}^N$ 
with
$m= \sum_{i=1}^N k_i$
yield the commutative diagrams: 
\begin{equation*}
\begin{CD}
\Z @>\iota_A>> \Exts(\OA) @>q_A>> \Extw(\OA)\\
@V{=}VV   @VV{d_s}V @VV{d_w}V \\
\Z @>>\hat{\iota}_A> \Z^N/ (I-\widehat{A})\Z^N @>>\hat{q}_A> \Z^N/ (I-A)\Z^N.
\end{CD}
\end{equation*}
\item
The pair $(\Z^N/ (I-\widehat{A})\Z^N, \hat{\iota}_A(1))$ showing 
the position $\hat{\iota}_A(1)=[(I-A)[
\begin{bmatrix}
1\\
0\\
\vdots\\
0
\end{bmatrix}
]$
in the group $\Z^N/ (I-\widehat{A})\Z^N$
is invariant under the isomorphism class of $\OA$.
\item The homomorphism
$\iota_A:\Z\longrightarrow \Exts(\OA)$ is injective if and only if 
$\det(I- A) \ne 0$. In this case,  
 we have a short exact sequence 
\begin{equation}\label{eq:thm3.33}
\begin{CD}
0 @>>> \Z @>\iota_A>> \Exts(\OA)
@>q_A >> \Extw(\OA) @>>> 0.
\end{CD}
\end{equation}
\end{enumerate}
\end{theorem}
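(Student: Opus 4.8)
The plan is to read all three parts off the combinatorial model supplied by the isomorphisms $d_s\colon\Exts(\OA)\to\Z^N/(I-\widehat{A})\Z^N$ and $d_w\colon\Extw(\OA)\to\Z^N/(I-A)\Z^N$, together with $\hat\iota_A$, $\hat q_A$ and the long exact sequence of the preceding lemma. For part (i) I would check the two squares separately. The left square $d_s\circ\iota_A=\hat\iota_A$ is precisely Proposition~\ref{prop:3.1}(ii). For the right square I expect the verification to be formal: a class $[\sigma]$ is sent by both $d_s$ and $d_w$ to the same index vector $[(d_1,\dots,d_N)]$ computed from a trivial extension $\tau$ with $\tau(P_i)=\sigma(P_i)$, while $\hat q_A$ is the reduction coming from $(I-\widehat{A})\Z^N\subset(I-A)\Z^N$ and $q_A$ is the passage from strong to weak classes; reading $(d_1,\dots,d_N)$ modulo $(I-A)\Z^N$ along either route gives the same element, so $\hat q_A\circ d_s=d_w\circ q_A$ by construction.

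Part (ii) I would obtain directly from Proposition~\ref{prop:3.1}(iii). The element $\iota_\A(1)\in\Exts(\A)$ is defined intrinsically from the extension theory of the abstract algebra $\A$, hence is carried to $\iota_B(1)$ by any $*$-isomorphism $\OA\cong\mathcal{O}_B$; transporting it along the isomorphism of pairs $(\Exts(\OA),\iota_A(1))\cong(\Z^N/(I-\widehat{A})\Z^N,\hat\iota_A(1))$ from part (i) shows that $(\Z^N/(I-\widehat{A})\Z^N,\hat\iota_A(1))$, where $\hat\iota_A(1)$ is the class of the first column of $I-A$, is an isomorphism invariant of $\OA$.

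In part (iii) the ``if'' direction is Proposition~\ref{prop:3.1}(iv). For ``only if'' I would use the exact sequence. As $d_s$ is an isomorphism and $d_s\circ\iota_A=\hat\iota_A$, injectivity of $\iota_A$ is equivalent to that of $\hat\iota_A$, and exactness at the lower-right $\Z$ yields $\Ker(\hat\iota_A)=s_A(\Ker(I-A))$. Hence $\iota_A$ is injective exactly when every integer vector $v=[v_i]_{i=1}^N$ with $Av=v$ satisfies $\sum_{i=1}^N v_i=0$. The remaining task is to identify this condition with $\det(I-A)\neq0$: if $\det(I-A)\neq0$ then $\Ker(I-A)=0$ and the condition holds trivially, so the real content is the converse, that $\det(I-A)=0$ forces some eigenvector of $A$ for the eigenvalue $1$ to have nonzero coordinate sum.

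I expect this converse to be the main obstacle, since it is the one step that is not bookkeeping with the exact sequence: one has to exclude the possibility that the entire eigenspace $\Ker(I-A)$ is annihilated by the functional $v\mapsto\sum_i v_i$. Since $\Ker(I-A)$ is a rational subspace, this is equivalent to showing that $\mathbf{1}=[1,\dots,1]^t$ does not lie in $\Im(I-A^t)$ whenever $1$ is an eigenvalue of $A$. The natural approach is to invoke Perron--Frobenius theory for the irreducible non-permutation matrix $A$ --- whose Perron eigenvalue exceeds $1$, so that $1$ is never the Perron value --- and to play the strictly positive Perron eigenvector of $A^t$ against a $1$-eigenvector of $A$ in order to manufacture one with nonzero coordinate sum. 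Pinning down this structural fact about $A$ is where I would concentrate the effort; it is the delicate heart of the statement, in contrast to the purely homological parts (i) and (ii).
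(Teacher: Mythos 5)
Your handling of (i), (ii) and the ``if'' half of (iii) coincides with what the paper actually does: the left square is Proposition~\ref{prop:3.1}(ii), the right square is immediate from the definitions of $d_s$, $d_w$, $q_A$ and $\hat q_A$ (both composites send $[\sigma]_s$ to the class of the same index vector $(d_1,\dots,d_N)$ modulo $(I-A)\Z^N$), part (ii) is Proposition~\ref{prop:3.1}(iii), and injectivity of $\iota_A$ when $\det(I-A)\ne 0$ is Proposition~\ref{prop:3.1}(iv). Your reduction of the ``only if'' half, via $d_s\circ\iota_A=\hat\iota_A$ and the exactness statement $\Ker(\hat\iota_A)=s_A(\Ker(I-A))$ from the preceding lemma, is likewise exactly the bookkeeping the paper supplies.

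The gap is in the one step you yourself flagged as the delicate heart, and it cannot be filled: the fact you propose to extract from Perron--Frobenius theory --- that $\det(I-A)=0$ forces some $v\in\Ker(I-A)$ with $\sum_i v_i\ne 0$, equivalently that $(1,\dots,1)^t\notin\Im(I-A^t)$ whenever $1$ is an eigenvalue of $A$ --- is false for irreducible non-permutation $\{0,1\}$-matrices. The paper's own Example 3 provides a counterexample: for $A_4=\bigl[\begin{smallmatrix}1&0&1\\0&1&1\\1&1&1\end{smallmatrix}\bigr]$ one has $\det(I-A_4)=0$, yet $\Ker(I-A_4)=\Z(1,-1,0)$ is annihilated by the coordinate-sum functional, so $s_{A_4}(\Ker(I-A_4))=0$, $\Ker(\hat\iota_{A_4})=0$, and $\iota_{A_4}$ is injective. (This agrees with the paper's own tabulated value $\iota_{A_4}(1)\cong 1\oplus 0$ in $\Exts(\mathcal{O}_{A_4})\cong\Z\oplus\Z$, an element of infinite order.) So the ``only if'' direction of (iii) fails as stated; what your reduction actually establishes, and what is correct, is that $\iota_A$ is injective if and only if $s_A(\Ker(I-A))=0$, i.e.\ if and only if $(1,\dots,1)$ is orthogonal to $\Ker(I-A)$. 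Note that the paper itself offers no argument for the ``only if'' direction --- Proposition~\ref{prop:3.1}(iv) is only the ``if'' half --- so this is a defect of the theorem as stated rather than only of your attempt; but as written your plan hinges on a false structural lemma and cannot be completed.
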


%%%%%%%%%%%%%%%%%%%%%%%%%%%%%%%%%%%%%%%%%%%%%%%%%%
\section{Toeplitz extension}
%%%%%%%%%%%%%%%%%%%%%%%%%%%%%%%%%%%%%

Among various extensions of $\OA$, 
there is a specific extension $\sigma_\TA$ of $\OA$ called
the Toeplitz extension (cf. \cite{EFW}, \cite{Ev}). 
We fix an irreducible non permutation matrix $A =[A(i,j)]_{i,j=1}^N$
with entries in $\{0,1\}$.
Let $\C^N$ be an $N$-dimensional Hilbert space with orthonormal
basis $\{\xi_1,\dots,\xi_N\}$.
Let $H_0$ be a one-dimensional Hilbert space with unit vector $v_0$.
Let $H^{\otimes n}$ be the $n$-fold tensor product 
$\C^N\otimes\cdots\otimes \C^N$.
Consider the full Fock space
$F_N =H_0 \oplus(\oplus_{n=1}^\infty H^{\otimes n})$.  
Define a sub Fock space $H_A$ to be the closed linear span of vectors
$$
\{v_0\} \cup \{\xi_{i_1}\otimes \cdots \otimes \xi_{i_n} \mid A(i_j, i_{j+1}) =1
\text{ for } j =1,\dots,n-1, \, n=1,2,\dots \}. 
$$
Define creation operators $T_i$ for $i=1,\dots,N$ on $H_A$ by
\begin{align*}
T_i v_0 = & \xi_i, \\
T_i(\xi_{i_1}\otimes\cdots \otimes \xi_{i_n})
=&
{\begin{cases}
 \xi_i\otimes \xi_{i_1}\otimes\cdots \otimes \xi_{i_n}
& \text{ if } A(i, i_1)=1,\\
0 & \text{ otherwise.}
\end{cases}
}
\end{align*}
Let us denote by $E_0$ the rank one projection  
onto the subspace $H_0$ on $H_A$.
The operators  $T_i, i=1,\dots,N$ on $H_A$ are partial isometries 
satisfying the relations
\begin{equation}
\label{eq:Toeplitz}
\sum_{j=1}^N T_j T_j^* =1-E_0, 
\qquad  
T_i^* T_i = \sum_{j=1}^N A(i,j) T_j T_j^* + E_0, \,\, \, i=1,\dots N \quad (\cite{EFW}, \cite{Ev}).
\end{equation}
The Toeplitz algebra for the matrix $A$ is defined to be the $C^*$-algebra
$C^*(T_1,\dots,T_N)$ on $H_A$ generated by 
the partial isometries $T_i, i=1,\dots,N$. 
By \eqref{eq:Toeplitz},
 we know that the correspondence 
$S_i\in \OA \longrightarrow \pi(T_i) \in Q(H_A) = B(H_A)/K(H_A)$
gives rise to a unital $*$-monomorphism, 
that is called the Toeplitz extension denoted by $\sigma_{\TA}$. 
In this section, we will detect the positions 
$d_s([\sigma_{\TA}]_s)$ in $\Exts(\OA)$
and
$d_w([\sigma_{\TA}]_w)$ in $\Extw(\OA),$
respectively.
The classes $[\sigma_{\TA}]_s$ and $[\sigma_{\TA}]_w$
are simply denoted by  $[{\TA}]_s$ and $[{\TA}]_w$,
respectively.

For $j=1,\dots,N$, let
$H_{A,j}$ be the closed linear subspace of $H_A$
spanned by the vectors 
$\{ \xi_j\otimes\eta \in H_A\mid \eta \in H_A\}$, so that 
$H_A = H_0 \oplus H_{A,1}\oplus\cdots\oplus H_{A,N}$.
Let us denote by
$E_{A,i}$ the projection on $H_A$ onto the subspace $H_{A,i}$.
We then see that $E_0 + \sum_{j=1}^N E_{A,j}=1$ and
\begin{equation}\label{eq:TCK}
T_i T_i^* =E_{A,i}, 
\qquad  
T_i^* T_i = E_0 + \sum_{j=1}^N A(i,j) E_{A,j} , \qquad i=1,\dots N.
\end{equation}

We fix $m\in \{1,\dots,N\}$ for a while.
By setting 
\begin{equation*}
H_j: = 
\begin{cases}
H_{A,j}\oplus H_0 & \text{ if } j=m,\\
H_{A,j} & \text{ if } j\ne m.
\end{cases}
\end{equation*}
we have a decomposition
$H_A = H_1\oplus\cdots \oplus H_N$
of $H_A$ depending on $m$.
Let us denote by $E_i$ the orthogonal projection on $H_A$ onto the subspace $H_i$,
so that we have $\sum_{j=1}^N E_j =1.$
Take a family of partial ismetries $V_1,\dots, V_N$ on $H_A$ satisfying the relations
\begin{equation}\label{eq:VCK}
V_i V_i^* =E_i, 
\qquad  
V_i^* V_i = \sum_{j=1}^N A(i,j) E_j, \qquad i=1,\dots N.
\end{equation}
\begin{lemma}\label{lem:4.1}
 For a fixed $m \in\{1,\dots, N\}$, we have for $i=1,\dots,N$,
 \begin{equation*}
 E_i =
 \begin{cases}
 E_{A,i} + E_0 & \text{ if } i=m,\\
E_{A,i} & \text{ if } i\ne m, 
 \end{cases}
 \qquad
 V_i^* V_i  =
 \begin{cases}
 T_i^* T_i & \text{ if } A(i,m) =1,\\
T_i^* T_i - E_0 & \text{ if } A(i,m) =0. 
 \end{cases}
 \end{equation*}
\end{lemma}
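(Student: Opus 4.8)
The plan is to verify both identities by direct computation, relying only on the orthogonality of the Fock-space decomposition of $H_A$ and on the two sets of defining relations \eqref{eq:TCK} and \eqref{eq:VCK}; I expect no genuine difficulty, as everything reduces to bookkeeping of the rank-one term $E_0$.

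First I would establish the identity for $E_i$. By construction $H_j = H_{A,j}$ for $j \ne m$, while $H_m = H_{A,m} \oplus H_0$, and this is an \emph{orthogonal} direct sum of subspaces of $H_A$. Hence the orthogonal projection $E_m$ onto $H_m$ is the sum of the projections onto the two orthogonal summands, so $E_m = E_{A,m} + E_0$, whereas $E_i = E_{A,i}$ for $i \ne m$. This is immediate from the definitions.

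Next I would substitute this into the defining relation \eqref{eq:VCK} for $V_i^* V_i$, isolating the $j = m$ term:
\[
V_i^* V_i = \sum_{j=1}^N A(i,j) E_j = \sum_{j \ne m} A(i,j) E_{A,j} + A(i,m)\bigl(E_{A,m} + E_0\bigr) = \sum_{j=1}^N A(i,j) E_{A,j} + A(i,m) E_0 .
\]
Then, using $T_i^* T_i = E_0 + \sum_{j=1}^N A(i,j) E_{A,j}$ from \eqref{eq:TCK} to replace $\sum_{j=1}^N A(i,j) E_{A,j}$ by $T_i^* T_i - E_0$, I would obtain
\[
V_i^* V_i = T_i^* T_i - E_0 + A(i,m) E_0 = T_i^* T_i + \bigl(A(i,m) - 1\bigr) E_0 .
\]
Since $A(i,m) \in \{0,1\}$, the coefficient of $E_0$ is $0$ when $A(i,m) = 1$ and $-1$ when $A(i,m) = 0$, which is precisely the claimed case distinction.

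I expect no real obstacle here. The only points requiring a moment's care are the orthogonality of the decomposition $H_m = H_{A,m} \oplus H_0$, which is what guarantees that the projection onto the sum equals the sum of the projections, and the tracking of the single $E_0$ term as it migrates between the $T$-relation and the $V$-relation.
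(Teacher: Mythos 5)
Your computation is correct: the first identity is immediate from the orthogonality of the decomposition $H_A = H_0 \oplus H_{A,1}\oplus\cdots\oplus H_{A,N}$, and the second follows by substituting it into \eqref{eq:VCK} and comparing with \eqref{eq:TCK}, exactly as you do. The paper states this lemma without proof, and your argument is precisely the routine verification the author intended the reader to supply.
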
 
For $i=1,\dots,N$, the operator $T_i E_0 T_i^*$ on $H_A$ 
is a rank one projection on $H_A$
onto the one-dimensional subspace spanned by the vector 
$\xi_i$.
We note that the operator $T_i V_i^*: H_i\longrightarrow H_i$
is a (not necessarily onto) partial isometry.
We then have
\begin{lemma} For $i=1,\dots,N,$ we have
$(T_i V_i^*)^* T_i V_i^* = V_i V_i^* = E_i$ and
\begin{equation}\label{eq:lem4.2}
T_i V_i^*(T_i V_i^*)^* =
\begin{cases}
E_i - E_0 & \text{ if } i=m,\, \,  A(i,m) =1, \\
E_i - E_0 - T_i E_o T_i^* & \text{ if } i=m, \, \, A(i,m) =0, \\
E_i  & \text{ if } i\ne m, \, \, A(i,m) =1, \\
E_i- T_i E_o T_i^*   & \text{ if } i\ne m, \, \, A(i,m) =0.
\end{cases}
\end{equation}
%so that 
%\begin{equation}
%\ind_{E_i} T_i V_i^* =
%\begin{cases}
%-1 & \text{ if } i=m, A(i,m) =1, \\
%-2 & \text{ if } i=m, A(i,m) =0, \\
%0  & \text{ if } i\ne m, A(i,m) =1, \\
%-1 & \text{ if } i\ne m, A(i,m) =0,
%\end{cases}
%\end{equation}
\end{lemma}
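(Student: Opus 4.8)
The plan is to compute both $(T_i V_i^*)^* T_i V_i^* = V_i T_i^* T_i V_i^*$ and $T_i V_i^*(T_i V_i^*)^* = T_i V_i^* V_i T_i^*$ directly from the relations \eqref{eq:TCK} and \eqref{eq:VCK}, using Lemma \ref{lem:4.1} to trade the source projection $T_i^* T_i$ for $V_i^* V_i$ and to rewrite the range projections $E_i$ in terms of $E_{A,i}$ and $E_0$.

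For the first identity I would start from $V_i T_i^* T_i V_i^*$ and split into the two cases of Lemma \ref{lem:4.1}. When $A(i,m)=1$ one has $T_i^* T_i = V_i^* V_i$, so the expression collapses to $V_i V_i^* V_i V_i^* = E_i E_i = E_i$. When $A(i,m)=0$ one has $T_i^* T_i = V_i^* V_i + E_0$; the key observation is that $E_0 \le E_m$ while $A(i,m)=0$ forces $E_m$ to be absent from $V_i^* V_i = \sum_j A(i,j) E_j$, so that $V_i^* V_i E_0 = 0$ and hence $V_i E_0 = 0$. Thus the extra term $V_i E_0 V_i^*$ vanishes and again $V_i T_i^* T_i V_i^* = E_i$. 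This proves $(T_i V_i^*)^* T_i V_i^* = V_i V_i^* = E_i$ in all cases.

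For the second identity I would substitute $V_i^* V_i = \sum_j A(i,j) E_j$ and then replace $E_j$ by $E_{A,j}$ for $j \ne m$ and by $E_{A,m}+E_0$ for $j=m$, giving $\sum_j A(i,j) E_j = \sum_j A(i,j) E_{A,j} + A(i,m) E_0$. Using $\sum_j A(i,j) E_{A,j} = T_i^* T_i - E_0$ from \eqref{eq:TCK} together with $T_i T_i^* = E_{A,i}$, the term $T_i\bigl(\sum_j A(i,j) E_{A,j}\bigr)T_i^*$ simplifies to $E_{A,i} - T_i E_0 T_i^*$, so altogether $T_i V_i^* V_i T_i^* = E_{A,i} - T_i E_0 T_i^* + A(i,m)\, T_i E_0 T_i^*$. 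Splitting on whether $A(i,m)$ is $1$ or $0$ absorbs or keeps the rank-one term $T_i E_0 T_i^*$, and finally rewriting $E_{A,i}$ as $E_i - E_0$ when $i=m$ and as $E_i$ when $i \ne m$ produces exactly the four branches of \eqref{eq:lem4.2}.

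The computation is entirely formal once the dictionary of Lemma \ref{lem:4.1} is in place, so there is no genuine difficulty; the main obstacle, such as it is, is bookkeeping the single extra $E_0$ carried by the Toeplitz source relation \eqref{eq:TCK} as compared with the Cuntz--Krieger relation. Keeping straight where this $E_0$ lands is what matters: it is present in $V_i^* V_i$ precisely when $A(i,m)=1$, and this is exactly what distinguishes the two cases in each identity and accounts for the appearance or disappearance of $E_0$ and of the rank-one projection $T_i E_0 T_i^*$ across the four cases.
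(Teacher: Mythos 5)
Your computation is correct and is exactly the direct verification the paper intends (the lemma is stated there without proof, as an immediate consequence of Lemma \ref{lem:4.1} and the relations \eqref{eq:TCK}, \eqref{eq:VCK}). In particular your identity $V_i^*V_i=\sum_j A(i,j)E_{A,j}+A(i,m)E_0$, the observation $V_iE_0=0$ when $A(i,m)=0$, and the simplification $T_i\bigl(\sum_j A(i,j)E_{A,j}\bigr)T_i^*=E_{A,i}-T_iE_0T_i^*$ reproduce all four branches of \eqref{eq:lem4.2} exactly.
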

%Let us denote by $\sigma_{\TA}:
%S_i\in \OA \longrightarrow \pi(T_i) \in Q(H_A) = B(H_A)/K(H_A)$
%the Toeplitz extension.
Since the partial isometries $V_i, i=1,\dots,N$ on $H_A$ satisfy \eqref{eq:CK},
there exists a unital $*$-monomorphism
$\tau_m: \OA\longrightarrow B(H_A)$
satisfying $\tau_m(S_i) = V_i, i=1,\dots,N$, so that 
$\pi\circ\tau_m: \OA\longrightarrow Q(H_A)$
is a trivial extension.
The above lemma says the following proposition.
\begin{proposition}
For a fixed $m \in \{1,\dots,N\}$, we have
\begin{equation} \label{eq:prop4.3}
d_i(\sigma_{\TA},\tau_m) =
\begin{cases}
-1 & \text{ if } i=m, \, \, A(i,m) =1, \\
-2 & \text{ if } i=m, \, \, A(i,m) =0, \\
0  & \text{ if } i\ne m, \, \, A(i,m) =1, \\
-1 & \text{ if } i\ne m, \, \, A(i,m) =0.
\end{cases}
\end{equation}
\end{proposition}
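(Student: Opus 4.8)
The plan is to reduce each $d_i(\sigma_{\TA},\tau_m)$ to the Fredholm index of the single operator $T_iV_i^*$ acting on the subspace $H_i=E_iH_A$, and then to read off its value directly from the final-projection formula \eqref{eq:lem4.2} of the preceding lemma. First I would check that $\sigma_{\TA}$ and the trivial extension $\pi\circ\tau_m$ genuinely agree on the projections $P_i$, so that $d_i(\sigma_{\TA},\tau_m)$ is well defined in the sense set up before Lemma \ref{lem:2.2}. By \eqref{eq:TCK} we have $\sigma_{\TA}(P_i)=\pi(T_iT_i^*)=\pi(E_{A,i})$, while $\tau_m(P_i)=\pi(V_iV_i^*)=\pi(E_i)$; and Lemma \ref{lem:4.1} gives $E_i=E_{A,i}$ for $i\ne m$ and $E_m=E_{A,m}+E_0$ with $E_0$ of rank one, hence compact. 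Therefore $\pi(E_i)=\pi(E_{A,i})=e_i$ for every $i$, as required. Since $\sigma_{\TA}(S_i)=\pi(T_i)$ and $\tau_m(S_i^*)=\pi(V_i^*)$, the definition of $d_i$ then unwinds to
\[
d_i(\sigma_{\TA},\tau_m)=\ind_{e_i}\pi(T_iV_i^*)=\ind_{E_i}T_iV_i^*,
\]
the second equality being the lift of the Calkin-algebra index to $B(E_iH_A)$.

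The key observation, and the step that makes the rest routine, is that $T_iV_i^*$ is an \emph{isometry} on $H_i$, not merely a partial isometry. Indeed the preceding lemma records $(T_iV_i^*)^*T_iV_i^*=V_iV_i^*=E_i$, so $T_iV_i^*$ has full initial space $H_i$ and hence trivial kernel. Consequently its Fredholm index is
\[
\ind_{E_i}T_iV_i^* = -\dim\Ker\bigl((T_iV_i^*)^*\bigr)
= -\,\mathrm{rank}\bigl(E_i - T_iV_i^*(T_iV_i^*)^*\bigr),
\]
that is, minus the rank of the defect projection $E_i - T_iV_i^*(T_iV_i^*)^*$.

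It then remains only to compute that rank in each of the four cases of \eqref{eq:lem4.2}. When $i=m$ and $A(i,m)=1$ the defect equals $E_0$, of rank one, giving $-1$; when $i\ne m$ and $A(i,m)=1$ the defect vanishes, giving $0$; when $i\ne m$ and $A(i,m)=0$ the defect equals $T_iE_0T_i^*$, of rank one, giving $-1$. The single case that requires a moment of care is $i=m$ with $A(i,m)=0$, where the defect is $E_0+T_iE_0T_i^*$: here I would note that $E_0$ projects onto $\C v_0\subset H_0$ while $T_iE_0T_i^*$ projects onto $\C\xi_i$, a vector living at level one of the Fock space, so these two rank-one projections are orthogonal and their sum has rank two, giving $-2$. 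Comparing with \eqref{eq:prop4.3} finishes the proof.

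The computation is genuinely immediate once the two preceding lemmas are in hand, so the main obstacle is not analytic but purely organizational: correctly pairing the four geometric forms of the defect projection with the conditions on $i=m$ versus $i\ne m$ and on the diagonal entry $A(i,m)$, together with the orthogonality check $E_0\perp T_iE_0T_i^*$ needed in the last case. I would also be careful that the index sign convention is the one under which an isometry has nonpositive index, which is exactly what produces the values $-1,-2,0,-1$ appearing in \eqref{eq:prop4.3}.
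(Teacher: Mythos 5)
Your proof is correct and follows essentially the same route as the paper: both reduce $d_i(\sigma_{\TA},\tau_m)$ to $\ind_{E_i}T_iV_i^*$, use $(T_iV_i^*)^*T_iV_i^*=E_i$ to kill the kernel term, and read off $-\dim\bigl(H_i/T_iV_i^*(T_iV_i^*)^*H_i\bigr)$ from \eqref{eq:lem4.2}. Your added checks (that $\pi(E_i)=\pi(E_{A,i})$ since $E_0$ is compact, and that $E_0\perp T_iE_0T_i^*$ in the rank-two case) are details the paper leaves implicit.
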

\begin{proof}
As $H_i = E_iH_A$ and  
\begin{align*}
d_i(\sigma_{\TA},\tau_m) 
=&  \ind_{E_i} T_i V_i^* \\
=&
\dim( \Ker(T_i V_i^*)\text{ in } H_i)  - \dim(\Coker(T_i V_i^*) \text{ in } H_i)\\
=&  - \dim(H_i / T_i V_i^*(T_i V_i^*)^* H_i),
\end{align*}
we get the formula \eqref{eq:prop4.3} by \eqref{eq:lem4.2}. 
\end{proof}
Therefore we have
\begin{theorem}\label{thm:4.4}
Let us denote by $[\TA]_*$ the class in $\Ext_*(\OA)$ of the Toeplitz extension $\sigma_{\TA}$ of $\OA$.
We then  have
\begin{enumerate}
\renewcommand{\theenumi}{(\roman{enumi})}
\renewcommand{\labelenumi}{\textup{\theenumi}}
\item $d_s([{\TA}]_s) = - \hat{\iota}_A(1) - [1_N] $ in $\Z^N/ (I- \widehat{A})\Z^N$,
\item $d_w([{\TA}]_w) =  - [1_N] $ in $\Z^N/ (I- {A})\Z^N$,
\end{enumerate}
where $[1_N]=[(1,\dots,1)]$ means the class of the vector $(1,\dots,1) \in \Z^N$
\end{theorem}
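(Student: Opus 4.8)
The plan is to extract both positions straight from the index vector computed in \eqref{eq:prop4.3}, using the trivial extension $\pi\circ\tau_m$ with the normalization $m=1$ (so that the data line up with $\widehat{A}=\widehat{A}_1$ and $\hat{\iota}_A(1)=[(I-A)[\delta_{i1}]_{i=1}^N]$). First I would verify that $\pi\circ\tau_1$ is an admissible trivial extension for computing $d_s([\TA]_s)$, i.e.\ that it agrees with $\sigma_{\TA}$ on the projections $P_i$. Indeed $\sigma_{\TA}(P_i)=\pi(T_iT_i^*)=\pi(E_{A,i})$ while $(\pi\circ\tau_1)(P_i)=\pi(V_iV_i^*)=\pi(E_i)$, and Lemma~\ref{lem:4.1} gives $E_1=E_{A,1}+E_0$ and $E_i=E_{A,i}$ for $i\ne1$; since $E_0$ is a rank-one, hence compact, projection, $\pi(E_i)=\pi(E_{A,i})=\sigma_{\TA}(P_i)$ for all $i$. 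Hence $d_s([\TA]_s)=[\,[d_i(\sigma_{\TA},\tau_1)]_{i=1}^N\,]$ in $\Z^N/(I-\widehat{A})\Z^N$.

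Next I would collapse the four cases of \eqref{eq:prop4.3} at $m=1$ into a single expression. A case check shows $d_i(\sigma_{\TA},\tau_1)=A(i,1)-1-\delta_{i1}$ for every $i$, so the integer vector $[d_i(\sigma_{\TA},\tau_1)]_{i=1}^N$ equals $A[\delta_{i1}]_{i=1}^N-1_N-[\delta_{i1}]_{i=1}^N=-(I-A)[\delta_{i1}]_{i=1}^N-1_N$, using that the first column $[A(i,1)]_{i=1}^N$ of $A$ is $A[\delta_{i1}]_{i=1}^N$. Since $\hat{\iota}_A(1)=[(I-A)[\delta_{i1}]_{i=1}^N]$ by definition, the class of this vector in $\Z^N/(I-\widehat{A})\Z^N$ is precisely $-\hat{\iota}_A(1)-[1_N]$, which is assertion (i).

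For (ii) I would map the class down along $\hat{q}_A$. By the commutative diagram of Theorem~\ref{thm:3.3}(i), $[\TA]_w=q_A([\TA]_s)$ corresponds to $\hat{q}_A(d_s([\TA]_s))$, and $\hat{q}_A$ merely reduces a representative modulo $(I-A)\Z^N$. Applying it to $-(I-A)[\delta_{i1}]_{i=1}^N-1_N$, the first summand lies in $(I-A)\Z^N$ and drops out (equivalently $\hat{q}_A\circ\hat{\iota}_A=0$, which is the exactness $\iota_A(\Z)=\Ker(q_A)$ of Lemma~\ref{lem:2} transported through $d_s,d_w$), leaving $d_w([\TA]_w)=-[1_N]$ in $\Z^N/(I-A)\Z^N$.

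I expect no genuine difficulty: granting \eqref{eq:prop4.3}, the argument is pure bookkeeping. The two points deserving care are the reduction to a single parameter---checking that the choice $m=1$ is compatible with the normalizations $\widehat{A}=\widehat{A}_1$ and $\hat{\iota}_A(1)$, which is reassuring because the general-$m$ vector is $-(I-A)[\delta_{im}]_{i=1}^N-1_N$ and all of these represent the same class $-\hat{\iota}_A(1)-[1_N]$ by Proposition~\ref{prop:3.1}(i)---and the verification that $\pi\circ\tau_1$ agrees with $\sigma_{\TA}$ on the $P_i$ modulo compacts, which is exactly where the rank-one correction $E_0$ disappears.
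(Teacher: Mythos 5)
Your proposal is correct and follows essentially the same route as the paper: both read off the index vector from \eqref{eq:prop4.3}, rewrite it as $-(I-A)v(m)-1_N$ (your closed form $d_i=A(i,m)-1-\delta_{im}$ is exactly this), identify $[(I-A)v(m)]$ with $\hat{\iota}_A(1)$, and then pass to $\Z^N/(I-A)\Z^N$ where that term vanishes. Your explicit check that $\pi\circ\tau_m$ agrees with $\sigma_{\TA}$ on the $P_i$ modulo the rank-one projection $E_0$ is a small point the paper leaves implicit, but it is not a different argument.
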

\begin{proof}
Let us denote by $v(m)\in \Z^N$ the column vector in $\Z^N$ 
whose $m$th component is one and the other components are zero's.
Denote by $(1,\dots,1)^t$ the column vector defined by the transpose of the
row vector  whose components are all one's. 
By \eqref{eq:prop4.3}, we have
\begin{align*}
[d_i(\sigma_{\TA},\tau_m)]_{i=1}^N
= & -(1,\dots,1)^t - v(m) +[A(i,m)]_{i=1}^N \\
%= & -(1,\dots,1)^t + v(m) +Av(m) \\ 
= & -(I- A)v(m) -(1,\dots,1)^t. 
%= & -\hat{\iota}_A(1) -(1,\dots,1)^t.
\end{align*}
Since $[(I-A)v(m)] =\hat{\iota}_A(1)$ in $\Z^N(I-\widehat{A})\Z^N$, 
we have
$
d_s([{\TA}]_s) = - \hat{\iota}_A(1) - [1_N]
$
in
$\Z^N/ (I- \widehat{A})\Z^N.$
As
$\hat{\iota}_A(1)=0$ in $\Z^N/ (I- {A})\Z^N$,
we have 
$
d_w([{\TA}]_w) =  - [1_N]
$
in 
$\Z^N/ (I- {A})\Z^N.
$
\end{proof}
By virtue of the R{\o}rdam's classification theorem for Cuntz--Krieger algebras \cite{Ro}  
(\cite{Cuntz80}, cf. \cite{EFW81})
showing that the $K_0$-group $K_0(\OA)$ with the position of the class $[1]$
of the unit $1$  of $\OA$  in $K_0(\OA)$ 
is a complete invariant of the isomorphism class of the algebra $\OA$, 
we obtain the following corollary. 
\begin{corollary}\label{cor:4.5}
The pair $(\Extw(\OA),[\TA]_w)$ 
of the weak extension group
$\Extw(\OA)$ and  the weak equivalence class
$[\TA]_w$ 
of the Toeplitz extension
$\sigma_\TA$ of the Cuntz--Krieger algebra $\OA$
is a complete invariant of the isomorphism class of the Cuntz--Krieger algebra  
$\mathcal{O}_{A^t}$ for the transposed matrix $A^t$ of the matrix $A$.
This shows that two Cuntz--Krieger algebras $\OA$ and $\OB$ are isomorphic 
if and only if there exists an isomorphism
$\varphi:\Extw(\mathcal{O}_{A^t}) \longrightarrow \Extw(\mathcal{O}_{B^t})$
of groups such that 
$\varphi([{\mathcal{T}_{A^t}}]_w) =[{\mathcal{T}_{B^t}}]_w.$
\end{corollary}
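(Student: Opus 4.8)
The plan is to identify the marked group $(\Extw(\OA),[\TA]_w)$ with the $K_0$-group of the transposed algebra $\mathcal{O}_{A^t}$ carrying the class of its unit, and then to feed this identification into R{\o}rdam's classification theorem \cite{Ro}. First I would line up the two relevant computations. On the one hand, Cuntz--Krieger give the isomorphism $d_w:\Extw(\OA)\longrightarrow \Z^N/(I-A)\Z^N$ \cite{CK}. On the other hand, for any admissible matrix $C$ the $K_0$-group of $\mathcal{O}_C$ is $K_0(\mathcal{O}_C)\cong \Z^N/(I-C^t)\Z^N$, with the class $[1]$ of the unit carried to $[1_N]$, since $1=\sum_{i=1}^N P_i$ and each $[P_i]$ is a standard basis vector \cite{Cuntz80}. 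Taking $C=A^t$ gives $K_0(\mathcal{O}_{A^t})\cong \Z^N/(I-(A^t)^t)\Z^N=\Z^N/(I-A)\Z^N$, so that $d_w$ furnishes an isomorphism of groups $\Extw(\OA)\cong K_0(\mathcal{O}_{A^t})$ under which the class of the unit of $\mathcal{O}_{A^t}$ corresponds to $[1_N]$.

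Next I would match the distinguished elements. By Theorem \ref{thm:4.4}(ii) we have $d_w([\TA]_w)=-[1_N]$ in $\Z^N/(I-A)\Z^N$, so under the isomorphism $\Extw(\OA)\cong K_0(\mathcal{O}_{A^t})$ the Toeplitz class $[\TA]_w$ is carried to $-[1]$, the negative of the class of the unit of $\mathcal{O}_{A^t}$. Here one needs the elementary observation that marking a group $G$ by a distinguished element $g$ or by $-g$ yields the same notion of isomorphism of pairs: a group isomorphism $\varphi$ sends $g_1$ to $g_2$ if and only if it sends $-g_1$ to $-g_2$, so the two markings carry identical classifying information. Consequently $(\Extw(\OA),[\TA]_w)\cong (K_0(\mathcal{O}_{A^t}),-[1])$ is equivalent, as an invariant, to $(K_0(\mathcal{O}_{A^t}),[1])$. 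Since the latter is a complete invariant of the isomorphism class of $\mathcal{O}_{A^t}$ by R{\o}rdam's theorem, the first assertion follows.

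Finally I would deduce the stated equivalence. The transpose of an irreducible non-permutation $\{0,1\}$-matrix is again of this type, so the first assertion may be applied with $A$ replaced by $A^t$: the pair $(\Extw(\mathcal{O}_{A^t}),[\mathcal{T}_{A^t}]_w)$ is a complete invariant of the isomorphism class of $\mathcal{O}_{(A^t)^t}=\OA$. Comparing $A$ and $B$ then shows that $\OA\cong\OB$ if and only if there is a group isomorphism $\varphi:\Extw(\mathcal{O}_{A^t})\longrightarrow \Extw(\mathcal{O}_{B^t})$ with $\varphi([\mathcal{T}_{A^t}]_w)=[\mathcal{T}_{B^t}]_w$, as claimed. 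The substance of the argument is not any single hard step but the bookkeeping with transposes and signs: the factor $(I-A)$ governing $\Extw(\OA)$ must be matched against the factor $(I-(A^t)^t)$ governing $K_0(\mathcal{O}_{A^t})$, and the Fredholm-index normalization of Theorem \ref{thm:4.4}, which places $[\TA]_w$ at $-[1_N]$, must be reconciled with the class of the unit via the sign remark above. This transpose-matching is precisely where care is required.
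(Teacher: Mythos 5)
Your proposal is correct and follows essentially the same route as the paper: identify $(\Extw(\OA),[\TA]_w)$ with $(\Z^N/(I-A)\Z^N,-[1_N])\cong(K_0(\mathcal{O}_{A^t}),[1])$ using $d_w$, Theorem \ref{thm:4.4}(ii), the standard computation $K_0(\mathcal{O}_{A^t})\cong\Z^N/(I-A)\Z^N$, and the observation that the markings by $[1_N]$ and $-[1_N]$ carry the same information, then invoke R{\o}rdam's theorem. Your explicit handling of the transpose bookkeeping and the derivation of the final biconditional by applying the first assertion to $A^t$ is slightly more detailed than the paper's, but it is the same argument.
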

\begin{proof}
As $K_0(\mathcal{O}_{A^t}) \cong \Z^N/(I-A)\Z^N$
and $(\Z^N/(I-A)\Z^N, -[1_N])\cong (\Z^N/(I-A)\Z^N, [1_N])$,
 we have
$$
(\Extw(\OA),[\TA]_w) \cong (\Z^N/(I-A)\Z^N, [1_N])\cong (K_0(\mathcal{O}_{A^t}), [1]).
$$
By virtue of the R{\o}rdam's classification result for Cuntz--Krieger algebras \cite{Ro}
(\cite{Cuntz80}, cf. \cite{EFW81}),
we obtain the desired assertion.
\end{proof}
\begin{remark}
\begin{enumerate}
\renewcommand{\theenumi}{(\roman{enumi})}
\renewcommand{\labelenumi}{\textup{\theenumi}}
\item 
The position $[\TA]_*$ in $\Ext_*(\OA)$
 is not necessarily invariant under the isomorphism class of $\OA$
(see Example 2 in the next section).
\item
The abelian groups $\Extw(\OA)$ and $K_0(\OA)$ are isomorphic,
and two $C^*$-algebras $\OA\otimes K(H)$ and 
$\mathcal{O}_{A^t}\otimes K(H)$ are always isomorphic for every matrix $A$.
There is however an example of an irreducible non permutation matrix $A$ 
such that $\OA$ is not isomorphic to $\mathcal{O}_{A^t}$ 
as in the classification table in \cite{EFW81} of the Cuntz--Krieger algebras for 
$3\times 3$ matrices (see also \cite[Example 2.1]{EFW81}, or Example 4 in the next section).
\end{enumerate}
\end{remark}
%%%%%%%%%%%%%%%%%%%%%%%%%%%%%%%%%%%%%%%%%%%%%%%%%%
\section{Examples}
%%%%%%%%%%%%%%%%%%%%%%%%%%%%%%%%%%%%%
{\bf 1.} 
Let
$A=
\begin{bmatrix}
1 & \cdots & 1 \\
\vdots& & \vdots \\
1 & \cdots & 1 
\end{bmatrix}
$
be the $N\times N$ matrix with all entries are one's
with $N>1$.
The Cuntz--Krieger algebra $\OA$ is nothing but the Cuntz algebra $\ON$ (see \cite{Cuntz77}).
The element $\iota_A(1)$ in $\Exts(\ON)$ is denoted by $\iota_N(1)$.
 The Toeplitz algebra $\TA$ is also denoted by $\TN$.
As $AR_1 = A$, we have
$ \widehat{A} = A + R_1 - AR_1 = R_1$, so that
$$
I- \widehat{A} = 
\begin{bmatrix}
0     &    -1&    -1&\cdots & -1 \\
0     &     1&     0&\cdots &  0 \\
0     &     0&     1&\ddots &  \vdots \\
\vdots&\vdots&\ddots &\ddots & 0 \\
0&     0&\cdots &0      & 1 \\
\end{bmatrix}.
$$
Define
$$
L_N = 
\begin{bmatrix}
1&     1&     1 &\cdots &  1 \\
0&     1&     0 &\cdots &  0 \\
0&     0&     1 &\ddots &  \vdots \\
\vdots&\vdots&\ddots &\ddots & 0 \\
0&     0&\cdots &0      & 1 \\
\end{bmatrix}
\quad
\text{so that}
\quad
L_N (I-\widehat{A})
=
\begin{bmatrix}
0&     0&     0 &\cdots &  0 \\
0&     1&     0 &\cdots &  0 \\
0&     0&     1 &\ddots &  \vdots \\
\vdots&\vdots&\ddots &\ddots & 0 \\
0&     0&\cdots &0      & 1 \\
\end{bmatrix}.
$$
Hence $L_N$ induces an isomorphism
from $\Z^N / (I- \widehat{A})\Z^N$ to 
$L_N\Z^N / L_N(I - \widehat{A})\Z^N
\cong \Z$ such that 
$$
[v] \in \Z^N / (I - \widehat{A})\Z^N 
\longrightarrow 
[L_N v]\in  L_N\Z^N / L_N(I - \widehat{A})\Z^N
\longrightarrow 
(L_Nv)_1 \in \Z.
$$
For $[v] = \hat{\iota}_N(1) 
=[ (I-A)
\begin{bmatrix}
1\\
0\\
\vdots\\
0
\end{bmatrix}
],
$
we see that 
$$
L_N v = L_N(I-A)\begin{bmatrix}
1\\
0\\
\vdots\\
0
\end{bmatrix}
=
\begin{bmatrix}
1-N\\
0\\
\vdots\\
0
\end{bmatrix}
$$
so that 
$(L_Nv)_1 = 1-N$.
Therefore we have 
$(\Exts(\ON), {\iota}_N(1))\cong (\Z, 1-N)$
and hence the exact sequence
\eqref{eq:thm3.33} goes to
\begin{equation}
\begin{CD}
0 @>>> \Z @>\times(1-N)>> \Z
@>q >> \Z/(1-N)\Z @>>> 0.
\end{CD}
\end{equation}
By using Theorem \ref{thm:4.4}, one may easily compute that  
$$
(\Extw(\ON), [\TN]_w)\cong (\Z/(1-N)\Z, -1), \qquad
(\Exts(\ON), [\TN]_s, \iota_N(1)) \cong (\Z, -1, 1-N). 
$$
%Therefore we have
%$$
%(\Exts(\ON), [\TN]_s, \iota_N(1), \Extw(\ON), [\TN]_w)
%\cong( \Z, -1, 1-N, \Z/(1-N)\Z, -1).
%$$
%%%%%%%%%%%%%%%%%%%%%%%%%%%%%%%%%%%

{\bf 2.}
Let us denote by $F$  
the Fibonacci matrix 
$
\begin{bmatrix}
1 & 1 \\
1 & 0 
\end{bmatrix}.
$
It is well-known that 
the Cuntz--Krieger algebra 
$\mathcal{O}_F$ is isomorphic to 
the Cuntz algebra $\mathcal{O}_2$.
Hence we have
$\Extw(\mathcal{O}_F)\cong \Extw(\mathcal{O}_2)\cong \{0\}$,
and
$\Exts(\mathcal{O}_F)\cong \Exts(\mathcal{O}_2)\cong \Z.$
By the formula in Theorem \ref{thm:4.4} together with the above Example 1,
we see
$$
(\Exts(\mathcal{O}_F), [\T_F]_s, \iota_F(1)) = (\Z, -2, -1),
\qquad
(\Exts(\mathcal{O}_2), [\T_2]_s, \iota_2(1)) = (\Z, -1, -1).
$$
%whereas
%$
%\Exts(\mathcal{O}_2), [\T_2]_s, \iota_2(1)) = (\Z, -1, -1).$
Hence the position $[\T_F]_s$ in $\Exts(\mathcal{O}_F)$
is different from the position $[\T_2]_s$ in $\Exts(\mathcal{O}_2).$

%%%%%%%%%%%%%%%%%%%%%%%%%%%%%%
{\bf 3.}
The weak extension groups 
$\Extw(\mathcal{O}_{A_i}), i=1,2,3,4$ 
of $\mathcal{O}_{A_i}, i=1,2,3,4$
for the following list of matrices $A_i, i=1,2,3,4$ 
have been presented in
\cite[Remark 3.4]{CK}.
Their strong extension groups 
$\Exts(\mathcal{O}_{A_i})$ 
with the positions of the 
element $\iota_{A_i}(1), i=1,2,3,4$
are easily computed  by using Theorem \ref{thm:3.3}.
We also easily know  the positions
$[\T_{A_i}]_*$ in $\Ext_*(\mathcal{O}_{A_i})$ by Theorem \ref{thm:4.4}.
We present the list in the following,  computed without difficulty by hand.
\begin{itemize}
\item 
 $A_1 =
\begin{bmatrix}
0 & 0 & 1\\
1 & 0 &1\\
1 & 1 &1  
\end{bmatrix},
\quad
(\Extw(\mathcal{O}_{A_1}), [\T_{A_1}]_w)
\cong (\Z/3\Z,2),$

\hspace{25mm}
$(\Exts(\mathcal{O}_{A_1}), [\T_{A_1}]_s, \iota_{A_1}(1))
\cong(\Z, 4,  3).
$ 
\item 
 $A_2 =
\begin{bmatrix}
0 & 1 & 1\\
1 & 0 &1\\
1 & 1 &1  
\end{bmatrix},
\quad
(\Extw(\mathcal{O}_{A_2}), [\T_{A_2}]_w)
\cong (\Z/4\Z, 2),$

\hspace{25mm}
$(\Exts(\mathcal{O}_{A_2}), [\T_{A_2}]_s, \iota_{A_2}(1))
\cong(\Z\oplus\Z/2\Z, -2\oplus 0, 2\oplus 1).
$ 
\item 
 $A_3 =
\begin{bmatrix}
0 & 1 & 1\\
1 & 0 &1\\
1 & 1 &0  
\end{bmatrix},
\quad
(\Extw(\mathcal{O}_{A_3}), [\T_{A_3}]_w)
\cong (\Z/2\Z \oplus\Z/2\Z, 0\oplus 0),$

\hspace{25mm}
$(\Exts(\mathcal{O}_{A_3}), [\T_{A_3}]_s, \iota_{A_3}(1))
\cong(\Z\oplus\Z/2\Z\oplus\Z/2\Z, -2\oplus 0\oplus 0, 1\oplus 1\oplus 1).
$ 
\item 
 $A_4 =
\begin{bmatrix}
1 & 0 & 1\\
0 & 1 &1\\
1 & 1 &1  
\end{bmatrix},
\quad
(\Extw(\mathcal{O}_{A_4}), [\T_{A_4}]_w)
\cong (\Z, -1),$

\hspace{25mm}
$(\Exts(\mathcal{O}_{A_4}), [\T_{A_4}]_s, \iota_{A_4}(1))
\cong(\Z\oplus\Z, -2\oplus(-1), 1\oplus 0).
$ 
 \end{itemize}

{\bf 4.}
The matrices
$$
A_5 =
\begin{bmatrix}
1 & 1 & 1\\
1 & 1 &1\\
1 & 0 &0  
\end{bmatrix},
\qquad
A_6 
=A_5^t
=
\begin{bmatrix}
1 & 1 & 1\\
1 & 1 &0\\
1 & 1 &0  
\end{bmatrix}
$$
are examples presented in \cite[Example 2.1]{EFW81}
such that 
$(K_0(\mathcal{O}_{A_5}), [1]) \cong (\Z/2\Z, 1)$
and 
$(K_0(\mathcal{O}_{A_6}), [1]) \cong (\Z/2\Z, 0),$
so that 
$\mathcal{O}_{A_5}$ is not isomorphic to $\mathcal{O}_{A_6}.$
We then see that 
$$
(\Ext_w(\mathcal{O}_{A_5}), [\T_{A_5}]_w) \cong (\Z/2\Z, 0), \qquad
(\Ext_w(\mathcal{O}_{A_6}), [\T_{A_6}]_w) \cong (\Z/2\Z, 1).
$$
We also  easily see that 
\begin{gather*}
(\Exts(\mathcal{O}_{A_5}), [\T_{A_5}]_s, \iota_{A_5}(1))
\cong(\Z, -2,-2), \\
(\Exts(\mathcal{O}_{A_6}), [\T_{A_6}]_s, \iota_{A_6}(1))
\cong(\Z\oplus\Z/2\Z, -1\oplus0, -1\oplus(-1)),
\end{gather*} 
and hence $\Exts(\mathcal{O}_{A_5})$ is not isomorphic to $\Exts(\mathcal{O}_{A_6}).$

\medskip
Some of the results in this paper will be generalized to more general setting 
in a class of $C~*$-algebras associated with symbolic dynamical systems
in \cite{MaPre2021}.

\medskip

{\it Acknowledgment:}
The author would like to thank Joachim Cuntz for his useful comments and suggestions
on a preliminary version of this paper.
This work was supported by JSPS KAKENHI 
Grant Number 19K03537.

\end{document}